\theoremstyle{definition}
\newtheorem{defn}{Definition}[section]
\newtheorem{prop}[defn]{Proposition}
\newtheorem{Remark}[defn]{Remark}
\newtheorem{thm}[defn]{Theorem}
\newtheorem{corr}[defn]{Corollary}
\newtheorem{lem}[defn]{Lemma}
\newtheorem{question}[defn]{Question}
\newtheorem{remark}[defn]{Remark}
\title[{\em M\MakeLowercase{aximal independent sets}, V\MakeLowercase{ariants of} CAC, \MakeLowercase{and} CWF}]{Maximal independent sets, variants of chain/antichain principle and cofinal subsets without AC}
\author{A\MakeLowercase{mitayu} B\MakeLowercase{anerjee}}
\address{Department of Logic, Institute of Philosophy, E\"otv\"os Lor\'and University,
M\'{u}zeum krt. 4/i Budapest, H-1088 Hungary}
\email{banerjee.amitayu@gmail.com}
\keywords{Maximal Independent sets, Graph homomorphism, Variants of chain/antichain principle, Cofinal well-founded subsets of partially ordered sets, Fraenkel-Mostowski (FM) permutation models of ZFA+$\neg AC$}
\begin{document}
\maketitle
\begin{abstract}
In set theory without the Axiom of Choice (AC), we observe new relations of the following statements with weak choice principles.

\begin{itemize}
     \item $\mathcal{P}_{lf,c}$ (Every locally finite connected graph has a maximal independent set).
    \item $\mathcal{P}_{lc,c}$ (Every locally countable connected graph has a maximal independent set).
    \item $CAC^{\aleph_{\alpha}}_{1}$ (If in a partially ordered set all antichains are finite and all chains have size $\aleph_{\alpha}$, then the set has size $\aleph_{\alpha}$) if $\aleph_{\alpha}$ is regular.
    \item CWF (Every partially ordered set has a cofinal well-founded subset).
    \item $\mathcal{P}_{G,H_{2}}$ (For any infinite graph $G=(V_{G}, E_{G})$ and any finite graph $H=(V_{H}, E_{H})$ on 2 vertices, if every finite subgraph of $G$ has a homomorphism into $H$, then so has $G$).
    \item If $G=(V_{G},E_{G})$ is a connected locally finite chordal graph, then there is an ordering $<$ of $V_{G}$ such that $\{w < v : \{w,v\} \in E_{G}\}$ is a clique for each $v\in V_{G}$.
\end{itemize}
\end{abstract}
\section{introduction}
As usual, ZF denotes the Zermelo-Fraenkel set theory without the Axiom of Choice (AC), and ZFA is ZF with the axiom of extensionality weakened to allow the existence of atoms. In this note, we observe new relations of some combinatorial statements with certain weak forms of AC. Complete definitions of the choice forms will be given in \textbf{Definition 2.4}.

\subsection{Maximal independent sets} Friedman [\cite{Fri2011}, \textbf{Theorem 6.3.2, Theorem 2.4}] proved that $AC$ is equivalent to the statement {\em `Every graph has a maximal independent set'} (abbreviated here as $\mathcal{P}$) in ZF. Spanring \cite{Spa2014} gave a different argument to prove the result.
Consider the following weaker formulations of $\mathcal{P}$. 

\begin{itemize}
\item Fix $n\in \omega\backslash\{0,1\}$. We denote by $P_{K_{n}}$, the class of those graphs whose only components are $K_{n}$ (complete graph on $n$ vertices).
We denote by $\mathcal{P}_{n}$ the statement {\em `Every graph from the class $P_{K_{n}}$ has a maximal independent set'}. 
\item We denote by $\mathcal{P}_{lf,c}$ the statement {\em `Every locally finite connected graph has a maximal independent set'}.
\item We denote by $\mathcal{P}_{lc,c}$ the statement {\em `Every locally countable connected graph has a maximal independent set'}.
\end{itemize}

In this note, we observe the following.
\begin{enumerate}
    \item $AC_{n}$ is equivalent to $\mathcal{P}_{n}$ for every $n\in \omega\backslash\{0,1\}$ in ZF (cf. [\textbf{$\S$3}, \textbf{Proposition 3.2}]).
    
    \item $AC_{fin}^{\omega}$ is equivalent to $\mathcal{P}_{lf,c}$ in ZF (cf. [\textbf{$\S$3}, \textbf{Proposition 3.3}]).
    \item $UT(\aleph_{0},\aleph_{0},\aleph_{0})$  implies $\mathcal{P}_{lc,c}$, and $\mathcal{P}_{lc,c}$ implies $AC_{\aleph_{0}}^{\aleph_{0}}$ in ZF (cf. [\textbf{$\S$3}, \textbf{Proposition 3.4}]).
\end{enumerate}
 
\subsection{A variant of Chain/Antichain principle} A famous application of the infinite Ramsey's theorem is the {\em Chain/Antichain principle} (abbreviated here as ``CAC''), which states that {\em `Any infinite partially ordered set contains either an infinite chain or an infinite antichain'}. Tachtsis \cite{Tac2016} investigated the possible placement of CAC in the hierarchy of weak choice principles. Komj\'{a}th--Totik \cite{KT2006} proved the following generalized versions of CAC, applying Zorn's lemma.
\begin{itemize}
    \item {\em If in a partially ordered set all antichains are finite and all chains are countable, then the set is countable} (cf. [\cite{KT2006}, \textbf{Chapter 11}, \textbf{Problem 8}]). 
    \item {\em If in a partially ordered set all chains are finite and all antichains are countable, then the set is countable} (cf. [\cite{KT2006}, \textbf{Chapter 11}, \textbf{Problem 7}]). 
\end{itemize}

For each regular $\aleph_{\alpha}$, we denote by $CAC^{\aleph_{\alpha}}_{1}$ the
statement {\em `if in a partially ordered set all antichains are finite and all chains have size $\aleph_{\alpha}$, then the set has size $\aleph_{\alpha}$'} and we denote by $CAC^{\aleph_{\alpha}}$ the
statement {\em `if in a partially ordered set all chains are finite and all antichains have size $\aleph_{\alpha}$, then the set has size $\aleph_{\alpha}$'}. In \cite{BG2020}, we observed that for any regular $\aleph_{\alpha}$ and any $2\leq n<\omega$, $CAC^{\aleph_{\alpha}}$ does not imply $AC_{n}^{-}$ in ZFA. In \cite{BG2020}, we also observed that $CAC^{\aleph_{\alpha}}$ does not imply {\em `there are no amorphous sets'} in ZFA. 
In this note, we observe the following.
\begin{enumerate}
    \item Let $n \in \omega \backslash \{0, 1\}$. The statement “For every regular $\aleph_{\alpha}$, $CAC^{\aleph_{\alpha}}_{1}$” implies neither $AC_{n}^{-}$ nor “there are no amorphous sets” in ZFA (cf. [\textbf{$\S$4}, \textbf{Theorem 4.3}]). 
    \item $CAC^{\aleph_{0}}_{1}$ implies  $PAC^{\aleph_{1}}_{fin}$ (Every $\aleph_{1}$-sized family $\mathcal{A}$ of non-empty finite sets has an $\aleph_{1}$-sized subfamily $\mathcal{B}$ with a choice function) in ZF (cf. [\textbf{$\S$4}, \textbf{Theorem 4.5}]).
    \item $DC$ does not imply $CAC_{1}^{\aleph_{0}}$ in ZF (cf. [\textbf{$\S$4}, \textbf{Corollary 4.6}]).
\end{enumerate}

\subsection{Cofinal well-founded subsets and consistency results}  Halbeisen--Tachtsis [\cite{HT2020}, \textbf{Theorem 10(ii)}] constructed a model of ZFA and proved that LOC$_{2}^{-}$ does not imply $LOKW_{4}^{-}$ in ZFA. 
We construct a similar model of ZFA and observe the following.

\begin{enumerate}
\item ($LOC_{2}^{-}$ + $MC$) does not imply $LOC_{n}^{-}$ in ZFA if $n\in\omega$ such that $n=3$ or $n>4$ (cf. [\textbf{$\S$5}, \textbf{Theorem 5.3}]). 
\item ($LOC_{2}^{-}$ + $MC$) does not imply $CAC^{\aleph_{0}}_{1}$ in ZFA (cf. [\textbf{$\S$5}, \textbf{Corollary 5.4}]).
\end{enumerate}

We also observe that under certain hypotheses on the group $\mathcal{G}$ and the normal filter $\mathcal{F}$, $CWF$ and $CS$ are true in the resulting permutation model $\mathcal{N}$ (cf. \textbf{Lemma 5.1}). 

\subsection{A generalized formulation of the $n$-coloring theorem} 
Fix a natural number $n\in \omega\backslash\{0,1\}$. Komj\'{a}th [\cite{Kom}, \textbf{Theorem 4.5.2}] sketched a proof of the following generalization of the {\em $n$-coloring theorem} applying $BPI$: {\em `For any infinite graph $G=(V_{G}, E_{G})$ and any finite graph $H=(V_{H}, E_{H})$, if every finite subgraph of $G$ has a homomorphism into $H$, then so has $G$'} abbreviated here as  $\mathcal{P}_{G,H}$. We denote by $\mathcal{P}_{G,H_{n}}$ the above statement if $H$ has $n$ vertices for $n\in \omega\backslash\{0,1\}$. Clearly, for every $n\in \omega\backslash\{0,1\}$, $\mathcal{P}_{G,H_{n}}$ implies the $n$-coloring theorem in ZF (consider the finite graph $H$ to be $K_{n}$), and the $n$-coloring theorem implies $AC_{n}$ in ZF \cite{Myc1964}. We note that for any integer $n \geq 3$, the $n$-coloring theorem is equivalent to $BPI$ in ZF, as shown by L\"{a}uchli \cite{Leau1971}. Consequently, $\mathcal{P}_{G, H_{n}}$ is equivalent to $BPI$ in ZF for every integer $n\geq 3$. In \cite{BG2020}, we observed that if $X\in \{AC_{3}, AC_{fin}^{\omega}\}$, then $\mathcal{P}_{G,H_{2}}$ does not imply $X$ in ZFA.
In this note, we observe that $AC_{2}$ is equivalent to $\mathcal{P}_{G,H_{2}}$ in ZF (cf. [\textbf{$\S$3}, \textbf{Proposition 3.6}]). 
 
\subsection{Locally finite connected graphs}   
Locally finite connected graphs are studied extensively in graph theory (cf. \cite{Die2017}). We list some graph-theoretical statements restricted to locally finite connected graphs, which follows from $AC^{\omega}_{fin}$ in ZF (cf. [\textbf{$\S$3}, \textbf{Remark 3.9}]). Moreover, we prove the following. 

\begin{enumerate}
    \item $AC_{fin}^{\omega}$ implies $\mathcal{P}_{G,H}$ in ZF, if $G$ is a locally finite connected graph (cf. [\textbf{$\S$3}, \textbf{Proposition 3.7}]).
    \item $AC_{fin}^{\omega}$ implies the statement {\em `If $G=(V_{G},E_{G})$ is a connected locally finite chordal graph, then there is an ordering $<$ of $V_{G}$ such that $\{w < v : \{w,v\} \in E_{G}\}$ is a clique for each $v\in V_{G}$'} in ZF (cf. [\textbf{$\S$3}, \textbf{Proposition 3.8}]).
\end{enumerate} 

%%%%%%%%%%%%%%%%%%%%%%%%%%%%%
\section{Notations, definitions, and known results} 
\begin{defn}{\em \textbf{(Graph-theoretical definitions, and notations)}.} The {\em degree} of a vertex $v\in V_{G}$ of a graph $G=(V_{G}, E_{G})$ is the number of edges emerging from $v$.
A graph $G=(V_{G}, E_{G})$ is {\em locally finite} if every vertex of $G$ has finite degree. We say that a graph $G=(V_{G}, E_{G})$ is {\em locally countable} if for every $v \in V_{G}$, the set of neighbors of $v$ is countable. Given a non-negative integer $n$, a {\em path of length $n$} in the graph $G=(V_{G}, E_{G})$ is a
one-to-one finite sequence $\{x_{i}\}_{0\leq i \leq n}$ of vertices such that for each $i < n$, $\{x_{i}, x_{i+1}\} \in E_{G}$; such a path joins $x_{0}$ to $x_{n}$. The graph $G$ is {\em connected} if any two vertices are joined by a path of finite length. A homomorphism from a graph $G=(V_{G}, E_{G})$ to a graph $H=\{V_{H}, E_{H}\}$ is a map $f$ from $V_{G}$ to $V_{H}$, such that if $\{v_{1}, v_{2}\}\in E_{G}$ then $\{f(v_{1}), f(v_{2})\}\in E_{H}$. A {\em good coloring} of a graph $G=(V_{G},E_{G})$ with a color set $C$ is a mapping $f:V_{G}\rightarrow C$ such that for every $\{x,y\}\in E_{G}$, $f(x)\not= f(y)$. Fix a natural number $n\in \omega$. A graph $G=(V_{G},E_{G})$ is {\em $n$-colorable} if there exists a good coloring of $G$ on $n$ colors.
We denote by $K_{n}$, the complete graph on $n$ vertices. We denote by $C_{n}$ the circuit of length $n$. A graph is {\em chordal} if it does not contain an induced $C_{n}$ for $n \geq 4$.
An {\em independent set} is a set of vertices in a graph, no two of which are connected by an edge. A set $W_{G} \subseteq V_{G}$ is called a {\em maximal independent set} in $G=(V_{G}, E_{G})$ if and only if it is independent and there is no independent set $W'_{G}$ such that $W_{G} \subsetneq W'_{G}$ (cf. \cite{Spa2014}). A {\em clique} is a set of  vertices in a graph, such that any two of them are joined by an edge. 
\end{defn}

\begin{defn} {\em \textbf{(Chain, antichain, cofinal well-founded subsets)}.} Let $(P, \leq)$ be a {\em partially ordered set} or a {\em poset}. 
A subset $D \subseteq P$ is called a {\em chain} if $(D, \leq\restriction D)$ is linearly ordered. A subset $A\subseteq P$ is called an {\em antichain}
if no two elements of $A$ are comparable under $\leq$. A subset $C \subseteq P$ is called {\em cofinal} in $P$ if for every $x \in P$ there is an element $c \in C$ such that $x \leq c$. An element $p \in P$ is {\em minimal} if for all $q \in P$, $(q \leq p)$ implies $(q = p)$. A subset $W \subseteq P$ is {\em well-founded} if every non-empty subset $V$ of $W$ has a $\leq$-minimal element.
\end{defn}

\begin{defn}{\em \textbf{(Amorphous sets).}} An inﬁnite set $X$ is called {\em amorphous} if $X$ cannot be written as a disjoint union of two inﬁnite subsets.
\end{defn}

\begin{defn}{\textbf{\em(A list of choice forms).}}
\begin{enumerate}
\item The \textbf{{\em Axiom of Choice}, $AC$ (Form 1 in \cite{HR1998})}: Every family of nonempty sets has a choice function.
\item  The \textbf{{\em Axiom of Choice for Finite Sets}, $AC_{ﬁn}$ (Form 62 in \cite{HR1998})}: Every family of non-empty ﬁnite sets has a choice function.
\item \textbf{{\em $AC_{fin}^{\omega}$}(Form 10 in \cite{HR1998})}:  Every denumerable, i.e. countably infinite, family of non-empty finite sets has a choice function. We recall two equivalent formulations of $AC_{fin}^{\omega}$.
\begin{itemize}
    \item \textbf{{\em $UT(\aleph_{0}, fin, \aleph_{0})$} (Form 10 A in \cite{HR1998})}: The union of denumerably many pairwise disjoint finite sets is denumerable. 
    \item \textbf{{\em $PAC_{fin}^{\omega}$}(Form 10 E in \cite{HR1998})}: Every denumerable family of finite sets has an infinite
subfamily with a choice function. 
\end{itemize}

\item \textbf{$AC_{\aleph_{0}}^{\aleph_{0}}$ (Form 32 A in \cite{HR1998})}:  Every denumerable family of denumerable sets has a choice function. We recall the following equivalent formulation of $AC_{\aleph_{0}}^{\aleph_{0}}$.
\begin{itemize}
\item  \textbf{$PAC_{\aleph_{0}}^{\aleph_{0}}$ (Form 32 B in \cite{HR1998})}:
Every denumerable set of denumerable sets has an infinite subset with a choice function. 
\end{itemize}

\item \textbf{$AC_{2}$ (Form 88 in \cite{HR1998})}:  Every family of pairs has a choice function. 
\item \textbf{$AC_{n}$ for each $n \in \omega, n \geq 2$ (Form 61 in \cite{HR1998})}: Every family of $n$-element sets has a choice function. We denote by $AC_{n}^{-}$ the statement {\em `Every infinite family $\mathcal{A}$ of $n$-element sets has a partial choice function, i.e., $\mathcal{A}$ has an infinite
subfamily $\mathcal{B}$ with a choice function.'} (cf. \textbf{Form 342(n) in \cite{HR1998}}).

\item \textbf{$LOC_{n}^{-}$ for each $n \in \omega, n \geq 2$} (see \cite{HT2020}): Every infinite linearly orderable family of $n$-element sets has a partial choice function. We denote by $LOKW_{n}^{-}$ the statement {\em `Every infinite linearly orderable family $\mathcal{A}$ of $n$-element sets has a partial Kinna--Wagner selection function, i.e., there exists an infinite subfamily $\mathcal{B}$ of $\mathcal{A}$ and a function $f$ such that $dom(f) = \mathcal{B}$
and for all $B \in \mathcal{B}$, $\emptyset \not= f(B)\subsetneq B$ (f is called a Kinna–Wagner selection function for $\mathcal{B}$).'} (cf. \textbf{Definition 1 (2)} of \cite{HT2020}).

\item \textbf{{\em Van Douwen’s Choice Principle}, $vDCP$} (see \cite{HT2013}): Every family $X = \{(X_{i}, \leq_{i}) : i \in I\}$ of linearly ordered sets
isomorphic with $(\mathbb{Z}, \leq)$ ($\leq$ is the usual ordering on $\mathbb{Z}$) has a choice function.

\item The \textbf{{\em Axiom of Multiple Choice}, $MC$ (Form 67 in \cite{HR1998})}: Every family $\mathcal{A}$ of non-empty sets has a multiple choice function, i.e., there is a function $f$ with domain $\mathcal{A}$ such that for every $A \in \mathcal{A}$, $f(A)$ is a non-empty finite subset of $A$.

\item \textbf{$MC(n)$ where $n \geq 2$ is an integer (see \cite{HT2013})}: For every family $\{X_{i}:i\in I\}$ of non-empty sets, there is a function $F$ with domain $I$ such that for all $i \in I$, we have that $F(i)$ is a finite subset of $X_{i}$ and $gcd(n,\vert F(i)\vert)=1$. 
\item \textbf{$LW$ (Form 90 in \cite{HR1998})}: Every linearly-ordered set can be well-ordered.

\item \textbf{$AC^{LO}$ (Form 202 in \cite{HR1998})}: Every linearly ordered family of non-empty sets has a choice function.
    
\item \textbf{$AC^{WO}$ (Form 40 in \cite{HR1998})}: Every well-ordered family of non-empty sets has a choice function.

\item \textbf{$DC_{\kappa}$ for an inﬁnite well-ordered cardinal $\kappa$ (Form 87($\kappa$) in \cite{HR1998})}: Let $\kappa$ be an inﬁnite well-ordered cardinal (i.e., $\kappa$ is an aleph). Let $S$ be a non-empty set and let $R$ be a binary relation such that for every $\alpha<\kappa$ and every $\alpha$-sequence $s =(s_{\epsilon})_{\epsilon<\alpha}$ of elements of $S$ there exists $y \in S$ such that $s R y$. Then there is a function $f : \kappa \rightarrow S$ such that for every $\alpha < \kappa$, $(f\restriction \alpha) R f(\alpha)$. We note that $DC_{\aleph_{0}}$ is a reformulation of $DC$ (the principle of Dependent Choices \textbf{(Form 43 in \cite{HR1998})}). We denote by $DC_{<\lambda}$ the assertion $(\forall\eta < \lambda)DC_{\eta}$.

\item \textbf{$UT(WO, WO, WO)$ (Form 231 in \cite{HR1998})}: The union of a well-ordered collection of well-orderable sets is well-orderable.
\item \textbf{$(\forall\alpha)UT(\aleph_{\alpha},\aleph_{\alpha},\aleph_{\alpha})$ (Form 23 in \cite{HR1998})}: For every ordinal $\alpha$, if $A$ and every member of $A$ has cardinality $\aleph_{\alpha}$, then $\vert \cup A\vert=\aleph_{\alpha}$.

\item {\em $\aleph_{1}$ is regular} \textbf{(Form 34 in \cite{HR1998})}.
\item The \textbf{{\em Boolean Prime Ideal Theorem}, $BPI$ (Form 14 in \cite{HR1998})}: Every Boolean algebra has a prime ideal. We recall the following equivalent formulation of BPI.
\begin{itemize}
 \item The \textbf{{\em n-coloring theorem for $n\geq 3$}, (Form 14G($n$)($n\in\omega, n\geq 3$) in \cite{HR1998})}: For every graph $G=(V_{G},E_{G})$ if every finite subgraph of $G$ is $n$-colorable then $G$ is $n$-colorable. This is De Bruijn--Erd\H{o}s theorem for $n\geq 3$ colorings.
\end{itemize}

\item \textbf{{\em Marshall Hall's theorem}, $MHT$ (Form 107 in \cite{HR1998})}: If $S$ is a set and $\{S_{i}\}_{i\in I}$ is an indexed family of \textbf{finite} subsets of $S$, then if the following property holds,
    \begin{center}
        (P) for every finite $F\subseteq I$, there is an injective choice function for $\{S_{i}\}_{i\in F}$.
    \end{center}
    then there is an injective choice function for $\{S_{i}\}_{i\in I}$. 

\item \textbf{{\em Dilworth’s  decomposition  theorem for  infinite  posets  of  finite  width,} $DT$ (cf.\cite{Tac2019}):}  If $\mathbb{P}$ is an arbitrary poset, and $k$ is a natural number such that $\mathbb{P}$ has no antichains of size $k + 1$ while at least one $k$-element subset of $\mathbb{P}$ is an antichain, then $\mathbb{P}$ can be partitioned into $k$ chains.  

\item \textbf{Rado’s Selection Lemma, $RSL$ (Form 99 in \cite{HR1998})}: Let $\mathcal{F}$ be a family of finite sets and suppose that to every finite subset $F$ of $\mathcal{F}$ there corresponds a choice function $\phi_{F}$ whose domain is $F$ such that $\phi_{F}(T) \in T$ for each $T \in F$. Then there is a choice function $f$ whose domain is $\mathcal{F}$ with the property that for every finite subset $F$ of $\mathcal{F}$, there is a finite subset $F'$ of $\mathcal{F}$ such that $F \subseteq F'$ and $f(T) = \phi_{F'}(T)$ for all $T \in F$.

\item The \textbf{{\em Antichain Principle} (Form 89 in \cite{HR1998})}: Every partially ordered set has a maximal antichain.

\item The \textbf{{\em Chain/Antichain Principle}, $CAC$ (Form 217 in \cite{HR1998})}: Every infinite poset has an infinite chain or an infinite antichain.
\item {\em There are no amorphous sets} \textbf{(Form 64 in \cite{HR1998})}.
\item {\em CS} (see \cite{HST2016}): Every poset without a maximal element has two disjoint cofinal subsets.
\item {\em CWF} (see \cite{Tac2018}): Every poset has a cofinal well-founded subset.

\item \textbf{A weaker form of \L{}o\'{s}'s lemma, $LT$ (Form 253 in \cite{HR1998})}: If $\mathcal{A}=\langle A, \mathcal{R}^{\mathcal{A}}\rangle$ is a non-trivial relational $\mathcal{L}$-structure over some language $\mathcal{L}$, and $\mathcal{U}$ be an ultrafilter on a non-empty set $I$, then the ultrapower $\mathcal{A}^{I}/\mathcal{U}$ and $\mathcal{A}$ are elementarily equivalent.  
\end{enumerate}
\end{defn}
%%%%%%%%%%%%%%%%%%%%%
\subsection{Group-theoretical facts} A group $\mathcal{G}$ {\em acts on a set $X$} if for each $g \in \mathcal{G}$ there is a mapping $x \rightarrow gx$ of $X$ into
itself, such that $1x = x$ for every $x \in X$ and $h(gx)=(hg)x$ for every $g,h \in \mathcal{G}$. Alternatively, actions of a group $\mathcal{G}$ on a set $X$ are the same as group homomorphisms from $\mathcal{G}$ to $Sym(X)$. Suppose that a group $\mathcal{G}$ acts on a set $X$. Let $Orb_{\mathcal{G}}(x)=\{gx:g\in \mathcal{G}\}$ be the orbit of $x \in X$ under the action of $\mathcal{G}$, and $Stab_{\mathcal{G}}(x)=\{g\in \mathcal{G}: gx=x\}$ be the stabilizer of $x$ under the action of $\mathcal{G}$. The {\em Orbit-Stabilizer theorem} states that the size of the orbit is the index of the stabilizer, that is $\vert Orb_{\mathcal{G}}(x)\vert = [\mathcal{G} : Stab_{\mathcal{G}}(x)]$. We also recall that different orbits of the action are disjoint and form a partition of $X$ i.e., $X=\bigcup \{Orb_{\mathcal{G}}(x): x\in X\}$.
An {\em alternating group} is the group of even permutations of a finite set. Let $\{G_{i}:i\in I\}$ be an indexed collection of groups. 
Define the following set.
\begin{equation}\prod_{i\in I}^{weak}G_{i}=\left\{f:I\rightarrow \bigcup_{i\in I} G_{i}\;\middle|\; (\forall i\in I)f(i)\in G_{i}, f(i)= 1_{G_{i}} \text{ for all but finitely many } i\right\}. \end{equation} 

The {\em weak direct product} of the groups $\{G_{i}:i\in I\}$ is the set $\prod^{weak}_{i\in I}G_{i}$ with the operation of component-wise multiplicative defined for all $f,g\in \prod^{weak}_{i\in I}G_{i}$ by $(fg)(i)=f(i)g(i)$ for all $i\in I$.
%%%%%%%%%%%%%%%%%%%%%%%%%%%
\subsection{Permutation models.} We start with a ground model $M$ of $ZFA+AC$ where $A$ is a set of atoms. 
Each permutation of $A$ extends uniquely to a permutation of $M$ by $\epsilon$-induction. A permutation model $\mathcal{N}$ of ZFA is determined by a group $\mathcal{G}$ of permutations of $A$ and a normal filter $\mathcal{F}$ of subgroups of $\mathcal{G}$.
Let $\mathcal{G}$ be a group of permutations of $A$ and $\mathcal{F}$ be a normal filter of subgroups of $\mathcal{G}$. For $x\in M$, we denote the symmetric group with respect to $\mathcal {G}$ by $sym_{\mathcal {G}}(x) =\{g\in \mathcal {G} \mid g(x) = x\}$. We say $x$ is {\em $\mathcal{F}$-symmetric} if $sym_{\mathcal{G}}(x)\in\mathcal{F}$ and $x$ is {\em hereditarily $\mathcal{F}$-symmetric} if $x$ and all elements of its transitive closure are
$\mathcal{F}$-symmetric. We define the permutation model $\mathcal{N}$ with respect to $\mathcal{G}$ and $\mathcal{F}$, to be the class of all hereditarily $\mathcal{F}$-symmetric sets. We recall that $\mathcal{N}$ is a model of $ZFA$ (cf. [\cite{Jec1973}, \textbf{Theorem 4.1}]). 
If $\mathcal{I}\subseteq\mathcal{P}(A)$ is a normal ideal, then the filter base $\{$fix$_{\mathcal{G}}E: E\in\mathcal{I}\}$ generates a normal filter over $\mathcal{G}$, where fix$_{\mathcal{G}}E$ denotes the subgroup $\{\phi \in \mathcal{G} : \forall y \in E (\phi(y) = y)\}$ of $\mathcal{G}$. 
Let $\mathcal{I}$ be a normal ideal generating a normal filter $\mathcal{F}_{\mathcal{I}}$ over $\mathcal G$. Let $\mathcal{N}$ be the permutation model determined by $M, \mathcal{G},$ and $\mathcal{F}_{\mathcal{I}}$. We say $E\in \mathcal{I}$ supports a set $\sigma\in \mathcal{N}$ if fix$_{\mathcal{G}}E\subseteq sym_{\mathcal{G}} (\sigma$). 

\begin{lem}
The following hold.
\begin{enumerate}
    \item In every Fraenkel-Mostowski permutation model, $CS$ implies $vDCP$ (cf. [\cite{HST2016}, \textbf{Theorem 3.15(3)}]).
    \item In ZFA, $CWF$ implies $LW$ (cf. [\cite{Tac2018}, \textbf{Lemma 5}]).
    \item In ZFA, $MC$ implies $CS$ (cf. [\cite{HST2016}, \textbf{Theorem 3.12}]) 
\end{enumerate}
\end{lem}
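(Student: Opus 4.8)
To prove (1), the plan is to fix a Fraenkel--Mostowski model $\mathcal{N}$ in which CS holds — determined by a permutation group $\mathcal{G}$ and the normal filter generated by a normal ideal of supports — and to work with an arbitrary family $\mathcal{X}=\{(X_i,\leq_i):i\in I\}\in\mathcal{N}$ in which each $(X_i,\leq_i)$ is order-isomorphic to $(\mathbb{Z},\leq)$; let $E$ be a support of $\mathcal{X}$. First I would form the poset $\mathbb{P}$ on the set $\{(i,x):i\in I,\ x\in X_i\}$ with $(i,x)\leq_{\mathbb{P}}(j,y)$ iff $i=j$ and $x\leq_i y$. Then $\mathbb{P}\in\mathcal{N}$ is supported by $E$ and, since every copy of $(\mathbb{Z},\leq)$ is unbounded above, $\mathbb{P}$ has no maximal element; so CS yields disjoint cofinal subsets $A,B\subseteq\mathbb{P}$ in $\mathcal{N}$, and I would fix a support $E'\supseteq E$ of both $A$ and $B$. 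For each $i$ the fibres $A_i=\{x:(i,x)\in A\}$ and $B_i=\{x:(i,x)\in B\}$ are disjoint, and each is cofinal — hence unbounded above — in $(X_i,\leq_i)$, because anything comparable in $\mathbb{P}$ to a point of $X_i$ lies in $X_i$.

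The second, and main, step is to convert $(A,B,E')$ into a choice function. The key is a rigidity observation: if for some $i$ with $X_i\cap E'=\emptyset$ the setwise stabiliser of $X_i$ inside $\mathrm{fix}_{\mathcal{G}}(E')$ acted on $(X_i,\leq_i)$ through the full translation group of $(\mathbb{Z},\leq)$, then $A_i$ would be invariant under all translations, hence $A_i\in\{\emptyset,X_i\}$; cofinality would force $A_i=X_i$, and likewise $B_i=X_i$, contradicting $A_i\cap B_i=\emptyset$. From this ``no full translation'' rigidity, together with the finitely many atoms of $E'$, I would extract — uniformly in $i$ — a non-empty finite subset $F_i\subseteq X_i$ supported by $E'$, and then $i\mapsto\min_{\leq_i}F_i$ is the required choice function, the finitely many copies meeting $E'$ being handled by the same rigidity (now trivial, as such a copy already contains a named atom). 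The hard part will be this extraction step: one must verify that in a permutation model the \emph{only} obstruction to choosing from a family of $\mathbb{Z}$-copies is exactly the full-translation symmetry that CS has just been shown to preclude, which is also why the hypothesis is stated for permutation models rather than for ZFA.

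To prove (2), let $(L,\leq)$ be a linearly ordered set; I would apply CWF to the collection $\mathcal{J}$ of all initial segments of $(L,\leq)$, ordered by $\subseteq$. Any two initial segments of a linear order are $\subseteq$-comparable, so $(\mathcal{J},\subseteq)$ is itself a linear order, and hence a cofinal well-founded subset of it — supplied by CWF — is automatically a well-ordered chain, say $(J_\xi)_{\xi\leq\delta}$ in increasing order; being cofinal in $\mathcal{J}$, whose $\subseteq$-largest element is $L$, it must contain $L$, so $J_\delta=L$ and $\bigcup_{\xi\leq\delta}J_\xi=L$. Setting $B_\xi=J_\xi\setminus\bigcup_{\eta<\xi}J_\eta$ exhibits $L$ as the disjoint union $\bigsqcup_{\xi\leq\delta}B_\xi$ of convex blocks indexed along the ordinal $\delta$, so it remains to well-order the blocks and concatenate. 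The natural plan is to iterate this inside each $B_\xi$; the delicate point — and the part I expect to be the main obstacle — is that a naive recursion would require a choice-free simultaneous selection of well-orderings of the blocks, so instead one must assemble a single poset carrying the initial-segment structure of $L$ together with those of all of its iterated blocks, apply CWF to that poset just once, and check both that it is not well-founded (so CWF is not vacuous) and that its cofinal well-founded subsets decode into genuine well-orderings of $L$. This last verification is where CWF is used in an essentially stronger way than the weaker assertion that every linear order merely has some cofinal well-ordered subset.
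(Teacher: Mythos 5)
The paper offers no proof of this lemma: both parts are quoted as known results, (1) from \cite{THS2016} (Theorem 3.15(3)) and (2) from \cite{Tac2018} (Lemma 5). So the only question is whether your from-scratch arguments are complete, and in both parts the step you yourself flag as ``the hard part'' is a genuine gap rather than a routine verification.

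In (1), the set-up (the disjoint-union poset, disjoint cofinal $A,B$, and the fact that each $A_i,B_i$ is unbounded above in $X_i$) is fine, but the rigidity observation only excludes the case where the setwise stabiliser of $X_i$ inside $\mathrm{fix}_{\mathcal{G}}(E')$ acts through the \emph{full} translation group of $(\mathbb{Z},\leq)$. The image of that stabiliser in the translation group is a subgroup $k\mathbb{Z}$, and nothing you say rules out $k\geq 1$: then $A_i$ and $B_i$ need only be invariant under translation by $k$, so they can be disjoint non-empty unions of congruence classes modulo $k$ (say evens and odds), both cofinal, with no contradiction; and every $\mathrm{fix}_{\mathcal{G}}(E')$-orbit inside $X_i$ still contains a full coset of $k\mathbb{Z}$ and is therefore infinite. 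Hence the promised non-empty finite $E'$-supported set $F_i\subseteq X_i$ need not exist, and your assertion that full-translation symmetry is the \emph{only} obstruction to choosing is precisely what a proof must establish --- it is asserted, not proved, and is false in the form stated. In (2), the first application of CWF correctly produces a well-ordered chain of initial segments and hence a partition of $L$ into convex blocks indexed by an ordinal, but well-ordering all the blocks simultaneously is exactly as hard as the original problem, and the single poset to which you propose to apply CWF ``just once'' is never constructed; it also cannot be written down in advance, since the blocks arising at each stage of the iteration depend on the cofinal well-founded subset chosen at the previous stage, so there is no fixed poset whose iterated block structure you can encode before invoking CWF. Both halves therefore stop exactly where the real work of the cited proofs begins.
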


In this paper, 
\begin{itemize}
    \item Fix a natural number $n\geq 2$. We denote by $\mathcal{N}_{HT}^{1}(n)$ the permutation model constructed in [\cite{HT2020}, \textbf{Theorem 8}]. 
    \item We denote by $\mathcal{N}_{1}$ the basic Fraenkel model.
    \item We denote by  $\mathcal{N}_{HT}^{2}$ the permutation model constructed in [\cite{HT2020}, \textbf{Theorem 10(ii)}].
    \item We denote by  $\mathcal{N}_{2}$ the Second Fraenkel model.
    \item Fix a prime $p\in \omega$. We denote by  $\mathcal{N}_{22}(p)$ the permutation model constructed in [\cite{HT2013}, \textbf{$\S$4.4}].
    \item Fix a natural number $n$ such that $n= 3$ or $n >4$ and an infinite well-ordered cardinal number $\kappa$. We denote by  $\mathcal{M}_{\kappa,n}$ the permutation model constructed in \textbf{Theorem 5.3}. 
\end{itemize}

\subsection{Loeb's theorem}A topological space $(X,\tau)$ is called {\em compact} if for every $U \subseteq \tau$ such that
$\bigcup U = X$ there is a finite subset $V\subseteq U$ such that $\bigcup V = X$.
\begin{lem}{\em (cf. [\cite{Loeb1965}, \textbf{Theorem 1}]).}
{\em Let $\{X_{i}\}_ {i \in I}$ be a family of compact spaces which is indexed by a set $I$ on which there is a well-ordering $\leq$. If $I$ is an infinite set and there is a choice function $F$ on the collection $\{C$ : C is
closed, $C \not= \emptyset, C \subset X_{i}$ for some $i\in I\}$, then the product space
$\prod_{i\in I}X_{i}$ is compact in the product topology.}
\end{lem}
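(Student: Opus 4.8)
The plan is to derive this from the closed-set formulation of compactness, which is available in ZF: a space is compact if and only if every family of closed sets with the finite intersection property (FIP) has nonempty intersection. So it suffices to show that if $\mathcal{F}$ is a family of closed subsets of $X=\prod_{i\in I}X_{i}$ with the FIP, then $\bigcap\mathcal{F}\neq\emptyset$. Replacing $\mathcal{F}$ by the family of all finite intersections of its members (a choice-free move), I may assume $\mathcal{F}$ is nonempty and closed under finite intersections; I may also assume each $X_{i}\neq\emptyset$, since otherwise $X=\emptyset$ is trivially compact. Fixing the well-ordering $\leq$, write $I=\{i_{\alpha}:\alpha<\theta\}$ for an ordinal $\theta$. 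I would then build a point $x=(x_{i})_{i\in I}\in\bigcap\mathcal{F}$ by transfinite recursion, choosing the coordinate $x_{i_{\alpha}}$ at stage $\alpha$ using the hypothesised choice function $F$ on nonempty closed subsets of the factors, and no other form of choice.

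The recursion is to maintain the invariant: for every $\alpha\leq\theta$ and every $G\in\mathcal{F}$, the partial point $(x_{i_{\beta}})_{\beta<\alpha}$ lies in the closure of $\pi_{<\alpha}(G)$ in $\prod_{\beta<\alpha}X_{i_{\beta}}$, where $\pi_{<\alpha}$ is the projection of $X$ onto $\prod_{\beta<\alpha}X_{i_{\beta}}$. The base case $\alpha=0$ is trivial (the empty product is a one-point space and $\pi_{<0}(G)$ is that point, as $G\neq\emptyset$). At a limit $\alpha$, a basic neighbourhood of $(x_{i_{\beta}})_{\beta<\alpha}$ constrains only finitely many coordinates $\beta_{1}<\cdots<\beta_{k}<\alpha$, so it is already ``seen'' at stage $\delta=\beta_{k}+1<\alpha$, and the invariant at $\delta$ (together with the fact that $\pi_{<\delta}$ factors through $\pi_{<\alpha}$) shows this neighbourhood meets $\pi_{<\alpha}(G)$; hence the invariant passes to $\alpha$. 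The heart of the argument is the successor step $\alpha=\gamma+1$: assuming $(x_{i_{\beta}})_{\beta<\gamma}\in\overline{\pi_{<\gamma}(G)}$ for all $G\in\mathcal{F}$, for each $G$ set $K_{G}:=\{t\in X_{i_{\gamma}}:((x_{i_{\beta}})_{\beta<\gamma},t)\in\overline{\pi_{\leq\gamma}(G)}\}$. Each $K_{G}$ is closed in $X_{i_{\gamma}}$ (being the preimage of a closed set under a continuous map), and it is nonempty because the projection $\prod_{\beta\leq\gamma}X_{i_{\beta}}\to\prod_{\beta<\gamma}X_{i_{\beta}}$ is a closed map (the factor $X_{i_{\gamma}}$ being compact), hence sends the closed set $\overline{\pi_{\leq\gamma}(G)}$ onto a closed set containing $\pi_{<\gamma}(G)$ and therefore containing $(x_{i_{\beta}})_{\beta<\gamma}$. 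Since $K_{G_{1}\cap G_{2}}\subseteq K_{G_{1}}\cap K_{G_{2}}$ and $\mathcal{F}$ is closed under finite intersections, $\{K_{G}:G\in\mathcal{F}\}$ is a family of nonempty closed subsets of the compact space $X_{i_{\gamma}}$ with the FIP, so $\bigcap_{G\in\mathcal{F}}K_{G}$ is a nonempty closed subset of $X_{i_{\gamma}}$. I then set $x_{i_{\gamma}}:=F\bigl(\bigcap_{G\in\mathcal{F}}K_{G}\bigr)$; this is a single, well-defined element, so the recursion invokes the hypothesised choice function exactly once per successor stage and nothing more. By construction $x_{i_{\gamma}}\in K_{G}$ for all $G$, which is precisely the invariant at $\gamma+1$.

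Finally, with $x=(x_{i})_{i\in I}$ in hand: given $G\in\mathcal{F}$ and a basic open neighbourhood $N$ of $x$ constraining coordinates $i_{\beta_{1}},\dots,i_{\beta_{k}}$, let $\gamma$ be the largest of $\beta_{1},\dots,\beta_{k}$; the invariant at $\gamma+1$ gives $(x_{i_{\beta}})_{\beta\leq\gamma}\in\overline{\pi_{\leq\gamma}(G)}$, and translating this back through the projection $\pi_{\leq\gamma}$ shows $N\cap G\neq\emptyset$. As $N$ was arbitrary and $G$ is closed, $x\in G$; hence $x\in\bigcap\mathcal{F}$, and $X$ is compact.

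I expect the main obstacle to be careful bookkeeping rather than a conceptual difficulty: one must confirm that the only genuine use of choice is the single application of $F$ at each successor stage. In particular the ``closed projection along a compact factor'' lemma must be checked to be provable in ZF, which it is, since the usual tube-lemma argument requires only finitely many choices; the remaining ingredients — the closed-set characterisation of compactness, the reduction to a family closed under finite intersections, and the FIP arguments using compactness of each $X_{i_{\gamma}}$ — are all choice-free, and the well-ordering of $I$ is exactly what powers the transfinite recursion.
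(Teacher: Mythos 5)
Your proof is correct and is essentially Loeb's original argument, which the paper simply cites (Lemma 2.7 is stated without proof, with a reference to [Loeb1965, Theorem 1]): a transfinite recursion along the well-ordering of $I$, using compactness of each factor to produce a nonempty closed set of admissible next coordinates and the hypothesised choice function $F$ to select one. The only cosmetic difference is that you maintain the invariant via closures of projections onto initial segments (hence need the closed-projection/tube-lemma step and an explicit limit-stage verification), whereas Loeb keeps the finite-intersection property of the family augmented by the fibers $\pi_{i_\beta}^{-1}(\{x_{i_\beta}\})$, which makes the limit stages automatic; both versions are choice-free apart from the uses of $F$.
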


\subsection{A theorem of Fulkerson and Gross} 
Fulkerson--Gross \cite{FG1965} proved the following lemma.
\begin{lem}{\em (cf. [\cite{Kom2015}, \textbf{Lemma 1}], \cite{FG1965}).}
A finite graph $(V, X)$ is chordal if and only if there is an ordering $<$ of $V$ such that $\{w < v : \{w, v\} \in X\}$ is a clique for each
$v \in V$.
\end{lem}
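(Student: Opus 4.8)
The plan is to prove the two implications separately. For the direction ``such an ordering exists $\Rightarrow$ $(V,X)$ is chordal'', suppose $<$ is an ordering of $V$ with the stated property and let $C=(x_0,x_1,\dots,x_{k-1},x_0)$ be a cycle in $(V,X)$ with $k\geq 4$. Let $v=x_i$ be the $<$-largest vertex occurring on $C$. Its two cycle-neighbours $x_{i-1}$ and $x_{i+1}$ (indices modulo $k$) satisfy $x_{i-1}<v$, $x_{i+1}<v$ and $\{x_{i-1},v\},\{x_{i+1},v\}\in X$, so both lie in the clique $\{w<v:\{w,v\}\in X\}$; hence $\{x_{i-1},x_{i+1}\}\in X$. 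Since $k\geq 4$, the vertices $x_{i-1}$ and $x_{i+1}$ are non-consecutive on $C$, so this edge is a chord, and $(V,X)$ is chordal.

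For the converse, call a vertex $v$ of a graph $H$ \emph{simplicial} if $N_H(v)$ is a clique of $H$, and take as the main input \textbf{Dirac's lemma}: every finite chordal graph with at least one vertex has a simplicial vertex, and moreover every non-complete finite chordal graph has two non-adjacent simplicial vertices. Granting this, I would peel simplicial vertices off: put $G_1=(V,X)$, choose a simplicial vertex $v_1$ of $G_1$, and inductively let $G_{i+1}=G_i-v_i$ with $v_{i+1}$ a simplicial vertex of $G_{i+1}$. This is legitimate since each $G_i$ is an induced subgraph of $(V,X)$, and an induced subgraph of a chordal graph is again chordal (any cycle of the subgraph is a cycle of $(V,X)$, and a chord of it in $(V,X)$ has both endpoints in the subgraph). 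After $n=|V|$ steps this produces an enumeration $V=\{v_1,\dots,v_n\}$; define $<$ by declaring $v_i<v_j$ iff $i>j$. Then for each $i$ one has $\{w<v_i:\{w,v_i\}\in X\}=\{v_j: j>i,\ \{v_i,v_j\}\in X\}=N_{G_i}(v_i)$, which is a clique because $v_i$ is simplicial in $G_i$ --- exactly the required property.

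The main obstacle is Dirac's lemma itself, which I would establish by induction on $|V|$ through a minimal-separator argument. If $(V,X)$ is complete the claim is trivial; otherwise fix non-adjacent $a,b$ and let $S\subseteq V\setminus\{a,b\}$ be inclusion-minimal among sets whose removal puts $a$ and $b$ in distinct components of $(V,X)-S$. Let $A$ be the vertex set of the component of $(V,X)-S$ containing $a$ and set $G_A=(V,X)[A\cup S]$. Minimality of $S$ forces every vertex of $S$ to have a neighbour in $A$, and combined with chordality this yields that $S$ is a clique; also, a vertex of $A$ that is simplicial in $G_A$ is already simplicial in $(V,X)$, since all its $(V,X)$-neighbours lie in $A\cup S$. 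By the induction hypothesis applied to the strictly smaller chordal graph $G_A$, either $G_A$ is complete, whence every vertex of $A$ is simplicial in $(V,X)$, or $G_A$ has two non-adjacent simplicial vertices, and since $S$ is a clique at least one of them lies in $A$ and is thus simplicial in $(V,X)$. The symmetric argument on the side of $b$ gives a simplicial vertex of $(V,X)$ in $b$'s component of $(V,X)-S$; the two simplicial vertices obtained lie in different components of $(V,X)-S$ and are therefore non-adjacent, which completes the induction and the proof.
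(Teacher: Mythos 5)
Your proof is correct, and it is essentially the classical Fulkerson--Gross/Dirac argument that the paper's citation points to: the paper itself gives no proof of Lemma 2.8, merely citing [FG1965] and [Kom2015, Lemma 1], so there is nothing in the text to compare against beyond the sources. Your forward direction (the $<$-largest vertex of a cycle of length $\geq 4$ certifies a chord) is exactly right, and the converse via a perfect elimination ordering obtained by repeatedly deleting simplicial vertices, with Dirac's lemma proved by the minimal-separator induction, is the standard route. One small point where your write-up is more elliptical than it should be: to conclude that the minimal $a$--$b$ separator $S$ is a clique, you need each vertex of $S$ to have a neighbour in \emph{both} the component $A$ of $a$ and the component $B$ of $b$ (so that for $x,y\in S$ you can splice a shortest $x$--$y$ path through $A$ with one through $B$ into a cycle of length $\geq 4$ whose only possible chord is $\{x,y\}$); you state only the $A$-side, though the $B$-side follows from the same minimality argument. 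With that sentence added, the proof is complete.
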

%%%%%%%%%%%%%%%%%%%
\section{Graph theoretical observations}
\subsection{Maximal independent set} 

\begin{prop}(ZF)
{\em Every graph based on a well-ordered set of vertices has a maximal independent set.}
\end{prop} 

\begin{proof}
Let $G=(V_{G},E_{G})$ be a graph on a well-ordered set of vertices $V_{G}=\{v_{\alpha}: \alpha<\lambda\}$.
Thus we can use transfinite recursion, without using any form of choice, to construct a maximal independent set. Let $M_{0}=\emptyset$. Clearly, $M_{0}$ is an independent set. For any ordinal $\alpha$, if $M_{\alpha}$ is a maximal independent set, then we are done. Otherwise, there is some $v \in V_{G} \backslash M_{\alpha}$, where $M_{\alpha}\cup \{v\}$ is an independent set of vertices. In that case, let $M_{\alpha+1}= M_{\alpha}\cup\{v\}$. For limit ordinals $\alpha$, we use $M_{\alpha}=\bigcup_{i\in \alpha} M_{i}$. Clearly, $M =\bigcup_{i\in \lambda} M_{i}$ is a maximal independent set.
\end{proof}

\begin{prop}{(ZF)}
For every $n\in \omega\backslash\{0,1\}$, $\mathcal{P}_{n}$ is equivalent to $AC_{n}$.
\end{prop}

\begin{proof}
($\Leftarrow$) Fix $n\in \omega\backslash\{0,1\}$, and let us assume $AC_{n}$. Let $G=(V_{G},E_{G})$ be a graph from the class $P_{K_{n}}$ (cf. $\S$\textbf{1.1}, for definition of $P_{K_{n}}$). Let $\{G_{i}\}_{i\in I}=\{(V_{G_{i}},E_{G_{i}})\}_{i\in I}$ be the components of $G$. By $AC_{n}$ select $g_{i}\in V_{G_{i}}$ for each $i\in I$. We can see that $J=\{g_{i}:i\in I\}$ is a maximal independent set of $G$. For any $g_{i}, g_{j}\in J$ such that $g_{i}\not=g_{j}$, we have $\{g_{i},g_{j}\}\not\in E_{G}$. Consequently, $J$ is an independent set. For the sake of contradiction, suppose $J$ is not a maximal independent set. Then there is an independent set $L$ which must contain two vertices $x$ and $y$ from $V_{G_{i}}$ for some $i\in I$. Since $\{x,y\}\in E_{G}$, we obtain a contradiction.

($\Rightarrow$) Fix $n\in \omega\backslash\{0,1\}$, and let us assume $\mathcal{P}_{n}$. Consider a system of $n$-element sets $\mathcal{A}=\{A_{i}\}_{i\in I}$.
We construct a graph $G = (V_{G},E_{G})$. 

{\bf{\underline{Constructing $G$}:}} Let $V_{G}$ consist of all the pairs $(Y, y)$ such that $Y \in \mathcal{A}$ and $y \in Y$, and the edge set is defined as follows $\{(Y_{1}, y_{1}),(Y_{2}, y_{2})\}\in E_{G}$ if and only if $Y_{1} = Y_{2}$ and $y_{1} \not= y_{2}$.

Clearly, the components of $G$ are $K_{n}$. By $\mathcal{P}_{n}$, $G$ has a maximal independent set $M$. Since $M$ is an independent set, for each $Y \in \mathcal{A}$ there is at most one $y \in Y$ such that $(Y, y) \in M$. Since $M$ is a maximal independent set, there is at least one $y \in Y$ such that $(Y, y) \in M$. Consequently, $M$ determines a choice function for $\mathcal{A}$. 
\end{proof}

\begin{prop}
(ZF) $AC_{fin}^{\omega}$ is equivalent to $\mathcal{P}_{lf,c}$.
\end{prop}

\begin{proof}($\Rightarrow$) We assume $AC_{fin}^{\omega}$. Let $G = (V_{G}, E_{G})$ be some non-empty locally finite, connected graph. Consider some $r \in V_{G}$. Let $V_{0}=\{r\}$. For each integer $n \geq 1$, define $V_{n} = \{v \in V_{G} : d_{G}(r, v) = n\}$ where `$d_{G}(r, v) = n$' means there are $n$ edges in the shortest path joining $r$ and $v$.
Each $V_{n}$ is finite by locally finiteness of $G$, and $V_{G} = \bigcup_{n\in \omega}V_{n}$ by connectedness of $G$. By $UT(\aleph_{0},fin,\aleph_{0})$ (which is equivalent to $AC_{fin}^{\omega}$(cf. \textbf{Definition 2.4})), $V_{G}$ is countable. Consequently, $V_{G}$ is well-ordered. The rest follows from \textbf{Proposition 3.1}.

($\Leftarrow$) We assume $\mathcal{P}_{lf,c}$. Since $AC_{fin}^{\omega}$ is equivalent to its partial version $PAC_{fin}^{\omega}$ (cf. \textbf{Definition 2.4} or \cite{HR1998}), it suffices to show $PAC_{fin}^{\omega}$.
Let $\mathcal{A}=\{A_{n}:n\in \omega\}$ be a denumerable set of non-empty finite sets. Without loss of generality, we assume that $\mathcal{A}$ is disjoint. Consider a denumerable sequence $T=\{t_{n}:n\in \omega\}$ disjoint from $\mathcal{A}$. We construct a graph $G=(V_{G}, E_{G})$. 
\begin{figure}[!ht]
\centering
\begin{minipage}{\textwidth}
\centering
\begin{tikzpicture}
\draw (-2.5, 1) ellipse (2 and 1);
\draw (-4,1) node {$\bullet$};
\draw (-3,1) node {$\bullet$};
\draw (-2,1) node {$\bullet$};
\draw (-1.5,1) node {...};
\draw (-0.5,1.6) node {$A_{1}$};
\draw (-4,1) -- (-3,1);
\draw (-3,1) -- (-2,1);
\draw (-2,1) to[out=-70,in=-70] (-4,1);
\draw (-2.5,-1) node {$\bullet$};
\draw (-2.5,-1.3) node {$t_{1}$};
\draw (-4,1) -- (-2.5,-1);
\draw (-3,1) -- (-2.5,-1);
\draw (-2,1) -- (-2.5,-1);
\draw (2.5, 1) ellipse (2 and 1);
\draw (1,1) node {$\bullet$};
\draw (2,1) node {$\bullet$};
\draw (3,1) node {$\bullet$};
\draw (3.5,1) node {...};
\draw (4.5,1.6) node {$A_{2}$};
\draw (1,1) -- (2,1);
\draw (2,1) -- (3,1);
\draw (1,1) to[out=-70,in=-70] (3,1);
\draw (2.5,-1) node {$\bullet$};
\draw (2.5,-1.3) node {$t_{2}$};
\draw (3,1) -- (2.5,-1);
\draw (2,1) -- (2.5,-1);
\draw (1,1) -- (2.5,-1);
\draw (-2.5,-1) -- (2.5,-1);
\draw (3.5,-1) node {...};
\draw (5,1) node {...};
\end{tikzpicture}
\end{minipage}
\caption{\em The graph $G$.}
\end{figure}
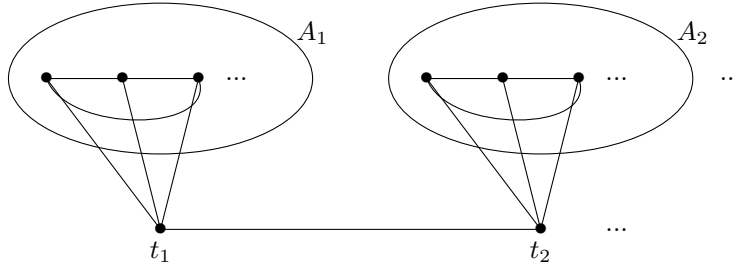

{\bf{\underline{Constructing $G$}:}}
Let $V_{G} = (\bigcup_{n\in \omega}A_{n})\cup T$. For each $n \in\omega$, let $\{t_{n},t_{n+1}\}\in E_{G}$ and $\{t_{n},x\}\in E_{G}$ for every element $x\in A_{n}$. Also for each $n \in\omega$, and any two $x,y\in A_{n}$ such that $x\not=y$, let $\{x,y\}\in E_{G}$ (see Figure 1).

Clearly, the graph $G$ is connected and locally finite. By assumption, $G$ has a maximal independent set of vertices, say $M$. Since $M$ is maximal, $M$ has to be infinite. Moreover, for each $i\in \omega$, either $t_{i}\in M$ or some $v\in A_{i}$ is in $M$.  Since $M$ is an independent set, for each $i \in \omega$ there is at most one $v \in A_{i}$ such that $v \in M$.
Define $M'=\{v\in M: v\in A_{i}$ for some $i\in \omega\}$. If $M'$ is finite, then since $\{t_{n},t_{n+1}\} \in E_{G}$ for all $n \in \omega$, it follows that for some $n \in \omega$, $M \cap (A_{n}\cup \{t_{n}\}) = \emptyset$. Then for any $u \in A_{n}$, $M \cup \{u\}$ is an independent set
which properly contains $M$, contradicting $M$’s being a maximal independent set. Thus $M'$ is infinite, which clearly yields a partial choice function for $\mathcal{A}$.
%%%%%%%%%%%%%%%%%%%%%%%%%%%%
\end{proof}
%%%%%%%%%%%%%%%%%%%%%%%%%%%%%%%%
\begin{prop}(ZF)
{\em $UT(\aleph_{0},\aleph_{0},\aleph_{0})$ implies $\mathcal{P}_{lc,c}$, and $\mathcal{P}_{lc,c}$ implies $AC_{\aleph_{0}}^{\aleph_{0}}$.}
\end{prop}
\begin{proof}
In order to prove the first implication, let $G = (V_{G}, E_{G})$ be some non-empty locally countable connected graph. Consider some $r \in V_{G}$. Let $V_{0}=\{r\}$. For each integer $n \geq 1$, define $V_{n} = \{v \in V_{G} : d_{G}(r, v) = n\}$. Since $G$ is locally countable, each $V_{n}$ is countable by $UT(\aleph_{0},\aleph_{0},\aleph_{0})$. Also $V_{G} = \bigcup_{n\in \omega}V_{n}$ since $G$ is connected. By $UT(\aleph_{0},\aleph_{0},\aleph_{0})$, $V_{G}$ is countable. The rest follows from \textbf{Proposition 3.1}. The second assertion follows from the arguments of \textbf{Proposition 3.3}, since $AC_{\aleph_{0}}^{\aleph_{0}}$ is equivalent to $PAC_{\aleph_{0}}^{\aleph_{0}}$ in ZF (cf. \textbf{Definition 2.4} or \cite{HR1998}).
\end{proof}

\begin{Remark}
Fix $n\in \omega\backslash\{0,1\}$. We denote by $P_{C_{n}}$, the class of those graphs whose only components are $C_{n}$.
We denote by $\mathcal{P}'_{n}$ the statement {\em `Every graph from the class $P_{C_{n}}$, has a maximal independent set'}. 
We remark that $AC_{P_{n}}$ implies $\mathcal{P}'_{n}$ in ZF where ${P}_{n}$ is the {\em Perrin number} of $n$ (Perrin numbers are defined by the recurrence relation $P(n) = P(n-2) + P(n-3)$ for $n > 2$, where the initial values are $P(0) = 3, P(1) = 0$, and $P(2) = 2$).
Let $G=(V_{G},E_{G})$ be a graph from the class $P_{C_{n}}$. Let $\{G_{i}\}_{i\in I}=\{(V_{G_{i}},E_{G_{i}})\}_{i\in I}$ be the components of $P_{C_{n}}$. Let $M_{i}$ be the collection of different maximal independent sets of $G_{i}$ for each $i\in I$. Since the number of different maximal independent sets in each component is $P_{n}$\footnote{We use the fact that the number of different maximal independent sets in an $n$-vertex cycle graph is the $n$-th Perrin number for $1<n <\omega$ (cf. \cite{Fur1987}).}, by $AC_{P_{n}}$ we can choose a $m_{i}\in M_{i}$ for each $i\in I$. Clearly, $\bigcup_{i\in I} m_{i}$ is a maximal independent set of $G$. 
\end{Remark} 
%%%%%%%%%%%%%%%%%%%
\subsection{The graph homomorphism problem} 
\begin{prop}{(ZF)} {\em $\mathcal{P}_{G,H_{2}}$ is equivalent to $AC_{2}$.}
\end{prop}

\begin{proof}
As mentioned in subsection 1.4, for any $n \in \omega\backslash \{0, 1\}$, $\mathcal{P}_{G,H_{n}}$ implies $AC_{n}$. We prove that $AC_{2}$ implies $\mathcal{P}_{G,H_{2}}$ in ZF. Let $H=(V_{H}, E_{H})$ be a graph such that $V_{H}=\{v_{1},v_{2}\}$, and $G=(V_{G}, E_{G})$ be an infinite graph. We assume that every finite subgraph of $G$ has a homomorphism into $H$. Let $I$ be the set of components of $G$. 

\textbf{Case (1). $\{v_{1},v_{1}\}\in E_{H}$ or $\{v_{2},v_{2}\}\in E_{H}$.} If $\{v_{1},v_{1}\}\in E_{H}$, then for any $G'=(V_{G'}, E_{G'})\in I$, $f_{G'}:G'\rightarrow H$ defined by $f_{G'}(x)=v_{1}$ for every $x\in V_{G'}$, is a homomorphism from $G'$ to $H$. The function $f:G\rightarrow H$, defined by $f(x)=f_{G'}(x)$ for $G'=(V_{G'}, E_{G'})\in I$ and $x\in V_{G'}$, is a homomorphism from $G$ to $H$. The case $\{v_{2},v_{2}\}\in E_{H}$ is similar.

\textbf{Case (2). $E_{H}=\{\{v_{1},v_{2}\}\}$.} 
We follow the proof of $AC_{2}$ implies the $2$-coloring problem in ZF (cf. \cite{Myc1964}). Fix an arbitrary $G'=(V_{G'}, E_{G'})\in I$ and select an arbitrary element $a\in V_{G'}$. The function $f_{G'}:G'\rightarrow H$ defined by $f_{G'}(v)=v_{1}$ if there is an odd number of vertices between $a$ and $v$ in the shortest path from $a$ to $v$ and $f_{G'}(v)=v_{2}$ otherwise, is a homomorphism from $G'$ to $H$.
Clearly, the set of homomorphisms $\phi:G'\rightarrow H$ contains precisely two elements. By $AC_{2}$, there exists a family $\{f_{G'}\}_{G'\in I}$ of homomorphisms $f_{G'}:G'\rightarrow H$. The function $f:G\rightarrow H$, defined by $f(x)=f_{G'}(x)$ for $G'=(V_{G'}, E_{G'})\in I$ and $x\in V_{G'}$ is a homomorphism from $G$ to $H$. 

\textbf{Case (3). $E_{H}=\emptyset$.} Then $G$ must be a discrete graph with no edges (by the assumption that every finite subgraph of $G$ has a homomorphism into $H$) and any possible mapping of vertices from $V_{G}$ to either $v_{1}$ or $v_{2}$ gives a homomorphism. 
Define a function $f:G\rightarrow H$ such that $x \mapsto v_{1}$ for each $x\in V_{G}$.
Clearly, $f$ is a homomorphism from $G$ into $H$ without using any form of choice.
\end{proof}
%%%%%%%%%%%%%%%%%%%%%%%%
\subsection{Locally finite connected graphs} 
\begin{prop}
(ZF) {\em $AC_{fin}^{\omega}$ implies $\mathcal{P}_{G,H}$, if $G$ is locally finite and connected.}
\end{prop}

\begin{proof} Let $G = (V_{G}, E_{G})$ be some non-empty locally finite, connected graph. Consider some $r \in V_{G}$. Let $V_{0}=\{r\}$. For each integer $n \geq 1$, define $V_{n} = \{v \in V_{G} : d_{G}(r, v) = n\}$. Each $V_{n}$ is finite by locally finiteness of $G$, and $V_{G} = \bigcup_{n\in \omega}V_{n}$ by connectedness of $G$. By $AC_{fin}^{\omega}$, $V_{G}$ is countable. We know that $\mathcal{P}_{G,H}$ holds in ZF, if $G$ is based on a well-ordered set of vertices (cf.\cite{BG2020}).
\end{proof}

\begin{prop} (ZF)$AC_{fin}^{\omega}$ implies the statement {\em `If $(V,X)$ is a connected locally finite chordal graph, then there is an ordering $<$ of $V$ such that $\{w < v : \{w,v\} \in X\}$ is a clique for each $v\in V$'.}
\end{prop}

\begin{proof} We note that by arguments in the proof of \textbf{Proposition 3.7}, it is enough to see that the statement {\em `If $(V,X)$ is a chordal graph based on a well orderable set of vertices, then there is an ordering $<$ of $V$ such that $\{w < v : \{w,v\} \in X\}$ is a clique for each $v\in V$'} is provable in ZF.
By \textbf{Lemma 2.7}, each finite subgraph $(W, X|W)$ has an ordering such
that $\{w < v : \{w, v\} \in X\restriction W\}$ is a clique for every $v \in W$. We can encode every
total ordering of a set $W$ by a choice of one of $<, =, >$ for each pair $(x, y) \in W\times W$. Endow $\{<, =, >\}$ with the discrete topology and $T = \{<, =, >
\}^{V\times V}$ with the product topology. Since $V$ is well-ordered, $V\times V$ is well-ordered in ZF. Consequently, $\{<,=,>\}\times \{V\times V\}$ is well-ordered in ZF. By \textbf{Lemma 2.6}, $T$ is compact. We use the compactness of $T$ to prove the existence of the desired ordering.
\end{proof}

\begin{Remark} We list some other graph-theoretical statements from different papers, restricted to locally finite connected graphs, which are related to $AC_{fin}^{\omega}$.
\begin{enumerate}
\item Komj\'{a}th--Galvin \cite{KG1991} proved that any graph based on a well-ordered set of vertices has a chromatic number and an irreducible good coloring in ZF. Consequently, the statements {\em `any locally finite connected graph has a chromatic number'} and {\em `any locally finite connected graph has an irreducible good coloring'} are provable under $AC_{fin}^{\omega}$ in ZF.

\item Hajnal [\cite{Haj1985}, \textbf{Theorem 2}] proved that if the chromatic number of a graph $G_{1}$ is finite (say $k<\omega$), and the chromatic number of another graph $G_{2}$ is infinite, then the chromatic number of $G_{1}\times G_{2}$ is $k$. In \cite{BG2020} we observed that if $G_{1}$ is based on a well-ordered set of vertices, then the following statement holds in ZF.
\begin{center}
    {\em `$\chi(E_{G_{1}})=k<\omega$ and $\chi(E_{G_{2}})\geq\omega$ implies $\chi(E_{G_{1}\times G_{2}})=k$.'}
\end{center}
Consequently, under $AC_{fin}^{\omega}$ the above statement holds in ZF if $G_{1}$ is a locally finite connected graph. 
 
\item Delhomm\'{e} and Morillon \cite{DM2006} proved that $AC_{fin}^{\omega}$ is equivalent to the statement {\em `Every locally finite connected graph has a spanning tree'} in ZF.
\item The $n$-coloring theorem restricted to locally finite connected graphs is provable under $AC_{fin}^{\omega}$ in ZF by \textbf{Proposition 3.7}.
\end{enumerate}
\end{Remark}

%%%%%%%%%%%%%%%%%%%%%%%%%%%
\section{A variant of CAC}
Tachtsis communicated to us the following lemma.

\begin{lem} The following holds.
\begin{enumerate}
    \item $UT(\aleph_{0},\aleph_{0},\aleph_{0})$ implies the statement {\em `If $(P, \leq)$ is a poset such that P is well-ordered, and if all antichains in P are finite and all chains in P are countable, then P is countable'}.
    \item “$\aleph_{1}$ is regular” implies the statement {\em `If $(P, \leq)$ is a poset such that P is well-ordered, and if all antichains in P are finite and all chains in P are countable, then P is countable'}.
\end{enumerate}
\end{lem}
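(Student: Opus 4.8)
The plan is to deduce part (1) from part (2), and to prove part (2) by running the proof of the Erd\H{o}s--Dushnik--Miller partition relation $\omega_1 \to (\omega_1, \omega)^2$ in a choice-free, ``canonical'' manner; the hypothesis that $P$ is well-ordered is exactly what makes this possible.

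First I note that $UT(\aleph_0,\aleph_0,\aleph_0)$ immediately implies ``$\aleph_1$ is regular'' (if $\omega_1$ were the union of countably many countable sets it would be countable by $UT(\aleph_0,\aleph_0,\aleph_0)$), so part (2) gives part (1); alternatively, the argument below for part (2) goes through verbatim under $UT(\aleph_0,\aleph_0,\aleph_0)$, since the only cardinal-arithmetic fact it uses about $\omega_1$ is that a countable union of countable subsets of $\omega_1$ is countable. So I will prove part (2). Assume ``$\aleph_1$ is regular'', let $(P,\le)$ be a well-ordered poset all of whose antichains are finite and all of whose chains are countable, and suppose for contradiction that $P$ is uncountable. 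Being well-orderable and uncountable, $P$ has a subset $P'$ of cardinality $\aleph_1$ on which the given well-ordering has order type $\omega_1$; transporting $\le$ through the unique order isomorphism $P' \cong \omega_1$ yields a partial order $\preceq$ on $\omega_1$ all of whose antichains are finite and all of whose chains are countable. It then suffices to produce an uncountable $\preceq$-chain, for that contradicts the assumption on $P \supseteq P'$. (An infinite $\preceq$-antichain would do just as well, and is what the ``bad branch'' of the argument throws up.)

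To build the chain I would carry out the Erd\H{o}s--Dushnik--Miller recursion for the colouring of $[\omega_1]^2$ recording $\preceq$-comparability, working inside a descending sequence of uncountable subsets $Y^{(0)} = \omega_1 \supseteq Y^{(1)} \supseteq \cdots$ of $\omega_1$. In the round based on a given uncountable $Y \subseteq \omega_1$: by recursion on $\xi$, let $a_\xi$ be the least element of $Y$ that is distinct from all $a_\eta$ $(\eta < \xi)$ and $\preceq$-comparable to each of them, so long as such an element exists; the $a_\xi$ so obtained are automatically pairwise $\preceq$-comparable. If the recursion runs for all $\xi < \omega_1$, then $\{a_\xi : \xi < \omega_1\}$ is an uncountable $\preceq$-chain and we are done. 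Otherwise it stops at some $\gamma < \omega_1$, and then $Y$ with the countably many ordinals $a_\eta$ $(\eta < \gamma)$ removed is contained in $\bigcup_{\eta < \gamma} \{x \in Y : x \text{ is } \preceq\text{-incomparable to } a_\eta\}$; since $Y$ is uncountable, this union is uncountable, so by ``$\aleph_1$ is regular'' -- a countable subset of $\omega_1$ is bounded, hence a countable union of countable subsets of $\omega_1$ is countable -- one of the sets in the union is uncountable. Taking the least $\eta^\ast < \gamma$ for which it is, set $b := a_{\eta^\ast}$ and $Y' := \{x \in Y : x \text{ is } \preceq\text{-incomparable to } b\}$; then $Y'$ is uncountable, $b \notin Y'$, and $b$ is $\preceq$-incomparable to every element of $Y'$, so we recurse on $Y'$. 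If no round ever succeeds, the elements $b^{(0)}, b^{(1)}, b^{(2)}, \dots$ produced in successive rounds satisfy that $b^{(k)}$ is $\preceq$-incomparable to everything in $Y^{(k+1)}$, which contains $b^{(j)}$ for every $j > k$; hence $\{b^{(k)} : k \in \omega\}$ is an infinite $\preceq$-antichain, contradicting finiteness of all antichains. Therefore some round succeeds, producing the uncountable $\preceq$-chain, which is the contradiction we sought.

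The one genuinely delicate point -- and the reason the lemma's hypotheses take the form they do -- is making the Erd\H{o}s--Dushnik--Miller argument run without the Axiom of Choice: the textbook proof chooses, at each of $\omega_1$ steps of the inner recursion and at each of the finitely many rounds, an element or subset witnessing something, whereas here, because the underlying set has been transported onto the ordinal $\omega_1$, every such selection can be made ``least'', so the entire construction is canonical; and the sole piece of cardinal arithmetic needed -- that countably many countable subsets of $\omega_1$ have countable union -- is exactly ``$\aleph_1$ is regular''. The remaining verifications (that the $a_\xi$ form a chain; that the stated cover of $Y \setminus \{a_\eta : \eta < \gamma\}$ holds; that $b$ is $\preceq$-incomparable to all of $Y'$; that the $b^{(k)}$ are pairwise $\preceq$-incomparable and distinct) are routine.
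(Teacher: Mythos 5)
Your proof is correct and is essentially the paper's own argument: both construct a maximal chain canonically from the given well-ordering, use the cardinal-arithmetic hypothesis to extract a single element of that chain incomparable to uncountably many remaining elements, and iterate to produce an infinite antichain (your greedy $a_\xi$-recursion is exactly the paper's ``maximal $\leq$-chain via transfinite induction,'' and your $b^{(k)}$ are its $p_k$). The only organizational differences are that you transport the poset onto $\omega_1$ explicitly and deduce part (1) from part (2) via ``$UT(\aleph_0,\aleph_0,\aleph_0)$ implies $\aleph_1$ is regular,'' whereas the paper proves (1) directly and remarks that (2) is proved similarly.
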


\begin{proof}
We prove (1). Let $(P,\leq)$ be a poset such that $P$ is well-ordered, all antichains in $P$ are finite, and all chains are countable. Fix a well-ordering $\preceq$ of $P$. By way of contradiction, assume that $P$ is uncountable.\footnote{Since we study in set theory without choice, we note that a set $X$ is uncountable if $\vert X\vert \not\leq \aleph_{0}$. We also note that without choice, “uncountable” may not generally have a clear meaning;
for example, another definition could be that $X$ is uncountable if $\aleph_{0} < \vert X\vert$ (meaning that there is an injection from $\omega$ into $X$ but not vice versa). The above two definitions are clearly equivalent in ZFC, but they are not equivalent in ZF.} 
We construct an infinite antichain to obtain a contradiction. 
Since $P$ is well-ordered by $\preceq$, we may construct (via transfinite induction) a maximal $\leq$-chain, $V_0$ say, without invoking any form of choice. Since $V_0$ is countable, it follows that $P-V_{0}$ is uncountable and every element of $P-V_{0}$ is incomparable to some element of $V_0$. Thus $P-V_{0} = \bigcup\{W_{p} : p\in V_{0}\}$, where $W_{p}$ is the set of all elements of $P-V_{0}$ which are incomparable to $p$. Since $P-V_{0}$ is uncountable and $V_0$ is countable, it follows by $UT(\aleph_{0},\aleph_{0},\aleph_{0})$ that $W_{p}$ is uncountable for some $p$ in $V_{0}$. Let $p_0$ be the least (with respect to $\preceq$) such element of $V_0$.
Now, construct a maximal $\leq$-chain in (the uncountable set) $W_{p_{0}}$, $V_1$ say, and let (similarly to the above argument) $p_1$ be the least (with respect to $\preceq$) element of $V_1$ such that the set $W_{p_{1}}$ of all elements of $W_{p_{0}}$ which are incomparable to $p_{1}$ is uncountable.
Continuing in this fashion by induction (and noting that the process cannot stop at a finite stage), we obtain a countably infinite antichain $\{p_{n}: n\in\omega\}$, contradicting the assumption that all antichains are finite. Therefore, $P$ is countable.

Similarly, we can prove (2).
\end{proof}

Modifying \textbf{Lemma 4.1}, we may observe that $UT(\aleph_{\alpha},\aleph_{\alpha},\aleph_{\alpha})$ implies the statement {\em `If $(P, \leq)$ is a poset such that P is well-ordered, and if all antichains in P are finite and all chains in P have size $\aleph_{\alpha}$, then $P$ has size $\aleph_{\alpha}$'} for any regular $\aleph_{\alpha}$ in ZF. 

\begin{corr}
The statement {\em `If $(P, \leq)$ is a poset such that P is well-ordered, and if all antichains in P are finite and all chains in P are countable, then P is countable'} holds in any Fraenkel-Mostowski model.
\end{corr}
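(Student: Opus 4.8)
The plan is to invoke Lemma 4.1 together with a well-known fact about Fraenkel--Mostowski permutation models: every such model satisfies the principle that $\aleph_1$ is regular. Indeed, in any FM permutation model $\mathcal{N}$ the ordinals are absolute, and moreover the well-orderable sets of $\mathcal{N}$ are (essentially) those coming from the ground model $M \models \mathsf{ZFA}+\mathsf{AC}$; since $M$ satisfies that $\aleph_1$ is regular and cofinality of a well-ordered cardinal is computed the same way in $\mathcal{N}$ (a cofinal map $\omega \to \omega_1$ in $\mathcal{N}$ would be a well-orderable, hence ground-model-coded, object contradicting regularity of $\omega_1$ in $M$), the statement ``$\aleph_1$ is regular'' (Form 34) holds in $\mathcal{N}$. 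So the first step is to record this absoluteness fact with the appropriate citation (it is standard; e.g. Form 34 holds in every permutation model because $\mathsf{AC}^{\mathrm{WO}}$, a fortiori $\mathsf{AC}_{\aleph_0}$, holds for well-ordered families in such models, or one can argue via cofinality absoluteness).

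Second, I would apply part (2) of Lemma 4.1: ``$\aleph_1$ is regular'' implies the statement that every poset $(P,\leq)$ with $P$ \emph{well-ordered}, all antichains finite, and all chains countable, is itself countable. Combining the two steps: in any FM permutation model $\mathcal{N}$, since $\aleph_1$ is regular in $\mathcal{N}$, the targeted restricted statement holds in $\mathcal{N}$. That is precisely the corollary. So the proof is essentially two lines: ``$\aleph_1$ is regular holds in every FM permutation model (by [appropriate reference/standard absoluteness]); hence by Lemma 4.1(2) the conclusion follows.''

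The one genuine subtlety — the step I expect to need the most care — is justifying that ``$\aleph_1$ is regular'' really does hold in an arbitrary FM permutation model. The cleanest route: a countable subset of an ordinal in $\mathcal{N}$ is well-orderable in $\mathcal{N}$, and any well-orderable set in $\mathcal{N}$ has a support $E$ such that $\mathrm{fix}_{\mathcal{G}}(E)$ fixes it pointwise; from this one shows the set lies in (a symmetric copy of) $M$, where $\mathsf{AC}$ holds, so $\omega_1^{\mathcal{N}} = \omega_1^{M}$ has uncountable cofinality in $\mathcal{N}$. Alternatively one simply cites that Form 34 is valid in all permutation models (this is recorded in Howard--Rubin's \emph{Consequences of the Axiom of Choice}). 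I would phrase the proof so as to lean on the citation rather than reprove the absoluteness, since the paper's Section 2 already sets up exactly the permutation-model machinery ($\mathrm{fix}_{\mathcal{G}}E$, supports, normal ideals) needed if a self-contained argument is preferred. Everything else is immediate from Lemma 4.1.
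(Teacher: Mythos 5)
Your proposal is correct and follows essentially the same route as the paper: the paper's proof is exactly the two-line argument you describe, namely that ``$\aleph_{1}$ is regular'' holds in every Fraenkel--Mostowski model (the paper cites [HKRST01, \textbf{Corollary 1}] for this) combined with \textbf{Lemma 4.1}(2). Your additional sketch of why Form 34 is absolute to permutation models is sound but unnecessary, since the paper simply leans on the citation as you also suggest.
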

\begin{proof}
Follows from the fact that the statement “$\aleph_{1}$ is a regular cardinal” holds in every Fraenkel-Mostowski model (cf. [\cite{HKRST2001}, \textbf{Corollary 1}]).
\end{proof}

\begin{thm}(ZFA) {\em Let $n \in \omega \backslash \{0, 1\}$. The statement “For every regular $\aleph_{\alpha}$, $CAC^{\aleph_{\alpha}}_{1}$” implies neither $AC_{n}^{-}$ nor “there are no amorphous sets”.}
\end{thm}

\begin{proof}
Halbeisen--Tachtsis [\cite{HT2020}, \textbf{Theorem 8}] constructed a permutation model
(we denote by $\mathcal{N}_{HT}^{1}(n)$) where for arbitrary $n\geq 2$, $AC_{n}^{-}$ fails but CAC holds. We fix an arbitrary integer $n\geq 2$ and recall the model constructed in the proof of [\cite{HT2020}, \textbf{Theorem 8}] as follows.

{\bf{\underline{Defining the ground model $M$:}}}
We start with a ground model $M$ of $ZFA+AC$ where $A$ is a countably infinite set of atoms written as a disjoint union $\bigcup\{A_{i}:i\in \omega\}$ where for each $i\in \omega$, $A_{i}=\{a_{i_{1}},a_{i_{2}},..., a_{i_{n}}\}$ and $\vert A_{i}\vert = n$. 

{\bf{\underline{Defining the group $\mathcal{G}$ and the filter $\mathcal{F}$ of subgroups of $\mathcal{G}$:}}}
\begin{itemize}
    \item {\bf{\underline{Defining $\mathcal{G}$:}}} $\mathcal{G}$ is defined in \cite{HT2020} in a way so that \textbf{if $\eta\in \mathcal{G}$, then $\eta$ only moves finitely many atoms} and for all $i\in \omega$, $\eta(A_{i})=A_{k}$ for some $k\in \omega$. We recall the details from \cite{HT2020} as follows. For all $i\in \omega$, let $\tau_{i}$ be the $n$-cycle $a_{i_{1}}\mapsto a_{i_{2}}\mapsto ... \mapsto a_{i_{n}}\mapsto a_{i_{1}}$. For every permutation $\psi$ of $\omega$, which moves only finitely many natural numbers, let $\phi_{\psi}$ be the permutation of $A$ defined by $\phi_{\psi}(a_{i_{j}})=a_{\psi(i)_{j}}$ for all $i\in \omega$ and $j=1,2,...,n$. Let $\eta\in \mathcal{G}$ if and only if $\eta=\rho \phi_{\psi}$ where $\psi$ is a permutation of $\omega$ which moves only finitely many natural numbers and $\rho$ is a permutation of $A$ for which there is a finite $F\subseteq\omega$ such that for every $k\in F$, $\rho\restriction A_{k}=\tau^{j}_{k}$ for some $j<n$, and $\rho$ fixes $A_{m}$ pointwise for every $m\in \omega\backslash F$. 
    \item {\bf{\underline{Defining $\mathcal{F}$:}}} Let $\mathcal{F}$ be the filter of subgroups of $\mathcal{G}$ generated by $\{$fix$_{\mathcal{G}}(E): E\in [A]^{<\omega}\}$. 
    \end{itemize}
{\bf{\underline{Defining the permutation model:}}}
Consider the FM-model $\mathcal{N}_{HT}^{1}(n)$ determined by $M$, $\mathcal{G}$ and $\mathcal{F}$.

Following \textbf{point 1} in the proof of [\cite{HT2020}, \textbf{Theorem 8}], both $A$ and $\mathcal{A}=\{A_{i}\}_{i\in \omega}$ are amorphous in $\mathcal{N}_{HT}^{1}(n)$ and no infinite subfamily $\mathcal{B}$ of $\mathcal{A}$ has a Kinna--Wagner selection function. Consequently, $AC^{-}_{n}$ fails. 
We prove that for any regular $\aleph_{\alpha}$, $CAC^{\aleph_{\alpha}}_{1}$ holds in $\mathcal{N}_{HT}^{1}(n)$. Let $(P,\leq)$ be a poset in $\mathcal{N}_{HT}^{1}(n)$ such that all antichains in $P$ are finite and all chains in $P$ have size $\aleph_{\alpha}$. Let $E\in [A]^{<\omega}$ be a support of $(P,\leq)$. Following the arguments of \textbf{[\cite{Tac2016}, claim 3]} we can see that for each $p \in P$, the set $Orb_{E}(p)=\{\phi(p): \phi\in$ fix$_{\mathcal{G}}(E)\}$ is an anti-chain in $P$. Following the arguments of \textbf{[\cite{Tac2016}, claim 4]} we can see that $P$ can be expressed as a well-orderable union of antichains. In fact, $\mathcal{O}=\{Orb_{E}(p): p\in P\}$ is a well-ordered partition of $P$. We note that all antichains in $P$ are finite, and hence well-orderable. Consequently, $P$ is well-orderable in $\mathcal{N}_{HT}^{1}(n)$ since $UT(WO,WO,WO)$ holds in $\mathcal{N}_{HT}^{1}(n)$. We also note that $UT(WO,WO,WO)$ implies UT($\aleph_{\alpha}$,$\aleph_{\alpha}$,$\aleph_{\alpha}$) in any FM-model (cf. page 176 of \cite{HR1998}).  So, we are done by  \textbf{Lemma 4.1} and the point noted in the paragraph after \textbf{Lemma 4.1} (cf. the arguments of \textbf{[\cite{Tac2016}, claim 5]} as well).
\end{proof}

\begin{Remark} We can see that in the basic Fraenkel model (labeled as Model $\mathcal{N}_{1}$ in \cite{HR1998}) the statement “For every regular $\aleph_{\alpha}$, $CAC^{\aleph_{\alpha}}_{1}$” holds. We recall that $UT(WO,WO,WO)$ holds in $\mathcal{N}_{1}$ (cf. \cite{HR1998}). Fix a regular $\aleph_{\alpha}$. Let $(P,\leq)$ be a poset in $\mathcal{N}_{1}$, and $E$ be a ﬁnite support of $(P,\leq)$. By the arguments of the proof of \textbf{Theorem 4.3}, $\mathcal{O}=\{Orb_{E}(p): p\in P\}$ is a well-ordered partition of $P$. Now for each $p\in P$, $Orb_{E}(p)$ is an antichain (cf. the proof of [\cite{Jec1973}, \textbf{Lemma 9.3}]). Thus, by methods from the proof of \textbf{Theorem 4.3}, $CAC^{\aleph_{\alpha}}_{1}$ holds in $\mathcal{N}_{1}$. In $\mathcal{N}_{1}$, the statement `there are no amorphous sets' is false. For reader's information we note that $AC_{n}^{-}$ and `there are no amorphous sets' are independent of each other.
\end{Remark}

\begin{thm}{\em (ZF) $CAC^{\aleph_{0}}_{1}$ implies $PAC^{\aleph_{1}}_{fin}$.}
\end{thm}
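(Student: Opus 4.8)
The plan is to prove the contrapositive by manufacturing, from a hypothetical ``bad'' family, a poset that refutes $CAC^{\aleph_{0}}_{1}$. So assume $CAC^{\aleph_{0}}_{1}$, let $\mathcal{A}$ be an infinite $\aleph_{1}$-sized family of non-empty finite sets, enumerate it injectively as $\mathcal{A}=\{A_{\alpha}:\alpha<\omega_{1}\}$, and, after replacing each $A_{\alpha}$ by $\{\alpha\}\times A_{\alpha}$ if necessary, assume the $A_{\alpha}$ are pairwise disjoint; this is harmless, since a choice function on an $\aleph_{1}$-sized subfamily of the disjointified family projects onto a choice function on the corresponding $\aleph_{1}$-sized subfamily of $\mathcal{A}$. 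Suppose, towards a contradiction, that no $\aleph_{1}$-sized subfamily of $\mathcal{A}$ has a choice function. Since every subfamily of $\mathcal{A}$ is well-orderable (being a subset of the well-orderable set $\mathcal{A}$), its cardinality is an aleph $\leq\aleph_{1}$, so this assumption is equivalent to: every subfamily of $\mathcal{A}$ admitting a choice function is countable; in particular $\mathcal{A}$ itself has no choice function.

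Next I would order $P=\bigcup_{\alpha<\omega_{1}}A_{\alpha}$ as follows: writing $\alpha(x)$ for the unique index with $x\in A_{\alpha(x)}$, declare $x\leq y$ iff $x=y$ or $\alpha(x)<\alpha(y)$. Thus elements lying in a common $A_{\alpha}$ are pairwise incomparable, while elements in distinct pieces are compared via the well-ordering of their indices; reflexivity, antisymmetry and transitivity are immediate. Every antichain of $(P,\leq)$ lies inside a single $A_{\alpha}$ — two incomparable elements must share an index — and is therefore finite. Every chain $C$ meets each $A_{\alpha}$ in at most one point (two distinct elements with a common index are incomparable), so $C$ induces a choice function on the subfamily $\{A_{\alpha}:C\cap A_{\alpha}\neq\emptyset\}$, which has the same cardinality as $C$; by the standing assumption that subfamily is countable, hence $C$ is countable. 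Finally $P=\bigcup\mathcal{A}$ is uncountable: were it countable it would be well-orderable, and then picking least elements would give a choice function on $\mathcal{A}$, contrary to the previous paragraph.

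Therefore $(P,\leq)$ is an uncountable poset all of whose antichains are finite and all of whose chains are countable, contradicting $CAC^{\aleph_{0}}_{1}$. Hence the assumption was false, so $\mathcal{A}$ has an $\aleph_{1}$-sized subfamily with a choice function, i.e.\ $PAC^{\aleph_{1}}_{fin}$ holds. The construction is short and each verification elementary, so I do not expect a real obstacle; the only genuinely non-mechanical step is hitting on this order on $\bigcup\mathcal{A}$ — using the well-order of the $\aleph_{1}$ index set to stratify $\bigcup\mathcal{A}$ so that antichains are trapped in a single finite $A_{\alpha}$ while chains behave like partial choice functions — and the point needing care is that the negated conclusion is used twice, once to bound chains and once to rule out $\bigcup\mathcal{A}$ being countable, together with the reduction to the pairwise disjoint case.
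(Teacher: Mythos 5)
Your proposal is correct and follows essentially the same route as the paper: you build the identical poset on $\bigcup\mathcal{A}$ (stratified by the $\omega_{1}$-indexing so that antichains are confined to a single finite $A_{\alpha}$ and chains are partial choice functions), and the application of $CAC^{\aleph_{0}}_{1}$ is the same, merely phrased as a contradiction rather than as a direct extraction of an uncountable chain. The extra care you take with disjointification and with verifying that $\bigcup\mathcal{A}$ is uncountable only makes explicit what the paper leaves implicit.
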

\begin{proof}
Let $\mathcal{A}=\{A_{n} : n \in \aleph_{1}\}$ be a family of non-empty ﬁnite sets. Without loss of generality, we assume that $\mathcal{A}$ is disjoint. Deﬁne a binary relation $\leq$ on $A =\bigcup \mathcal{A}$ as follows: for all $a,b \in A$, let $a\leq b$ if and only if $a = b$ or $a \in A_{n}$ and $b \in A_{m}$ and $n < m$. Clearly, $\leq$ is a partial order on $A$. Also, $A$ is uncountable. The only antichains of $(A,\leq)$ are the ﬁnite sets $A_{n}$ and subsets of $A_{n}$ where $n\in \aleph_{1}$. By $CAC^{\aleph_{0}}_{1}$, $A$ has an uncountable chain, say $C$. Let $M =\{m \in \aleph_{1} : C \cap A_{m}\not= \emptyset\}$. Since $C$ is a chain and $\mathcal{A}$ is the family of all antichains of $(A,\leq)$, we have $M =\{m \in \aleph_{1} : \vert C \cap A_{m}\vert =1\}$. Clearly, $f =\{ (m,c_{m}) : m \in M\}$, where for $m \in M$, $c_{m}$ is the unique element of $C \cap A_{m}$, is a choice function of the uncountable subset $\mathcal{B}=\{A_{m} : m \in M\}$ of $\mathcal{A}$. Thus $\mathcal{B}$ is an $\aleph_{1}$-sized subfamily of $\mathcal{A}$ with a choice function.
\end{proof}

\begin{corr}
{\em There exists a model of
ZF in which $DC$ holds and $PAC^{\aleph_{1}}_{fin}$ fails, and thus $CAC_{1}^{\aleph_{0}}$ also fails.}
\end{corr}

\begin{proof}
 We refer the reader to Jech [\cite{Jec1973}, \textbf{Theorem 8.3}] by noting that $\aleph_{\alpha}$ therein can be replaced by $\aleph_{1}$. We also note that the fact that $PAC^{\aleph_{1}}_{fin}$ is false in the model follows immediately from [\cite{Jec1973}, \textbf{Theorem 8.3(iii)}]. The rest follows from \textbf{Theorem 4.5}.
\end{proof}
%%%%%%%%%%%%%%%%%%%%%%%%%%%%%%
\section{Cofinal well-founded subsets in ZFA}

Tachtsis [\cite{Tac2018}, \textbf{Theorem 10(ii)}] proved that CWF holds in the basic Fraenkel model.
Howard, Saveliev, and Tachtsis [\cite{HST2016}, \textbf{Theorem 3.26}] proved that CS holds in the basic Fraenkel model. We modify the arguments from [\cite{HST2016},\textbf{Theorem 3.26}] and  [\cite{Tac2018}, \textbf{Theorem 10(ii)}] to observe the following.

\begin{lem}
{\em Let $A$ be a set of atoms. Let $\mathcal{G}$ be the group of permutations of A such that either each $\eta\in \mathcal{G}$ moves only finitely many atoms or there is a $n\in\omega\backslash \{0,1\}$, such that for all $\eta\in \mathcal{G}$, $\eta^{n}=1_{A}$.
Let $\mathcal{F}$ be the normal filter of subgroups of $\mathcal{G}$ generated by $\{$fix$_{\mathcal{G}}(E): E\in [A]^{<\omega}\}$. Then in the Fraenkel-Mostowski model $\mathcal{N}$ determined by $A$, $\mathcal{G}$, and $\mathcal{F}$, CS and $CWF$ hold. Consequently, vDCP and LW hold.
}
\end{lem}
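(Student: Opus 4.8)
The plan is to establish CS and CWF directly in the model $\mathcal{N}$, exploiting the two structural features shared by all of these models: every element has a \emph{finite} support $E \in [A]^{<\omega}$, and every permutation either moves only finitely many atoms or has order dividing some fixed $n$. The key consequence, already isolated in the proof of \textbf{Theorem 4.3}, is that for every $\sigma\in\mathcal{N}$ with support $E$, and in particular for every element $p$ of a poset $(P,\le)\in\mathcal{N}$ with support $E$, the orbit $Orb_E(p) = \{\phi(p):\phi\in\mathrm{fix}_{\mathcal{G}}(E)\}$ is \emph{finite}: in the bounded-order case $\mathrm{fix}_{\mathcal{G}}(E)$ contains only elements of order dividing $n$ and only finitely many atoms outside $E$ are relevant up to the action, so orbits are finite; in the "moves finitely many atoms" case orbits are finite for the same reason as in \textbf{Theorem 4.3}. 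Hence $\{Orb_E(p):p\in P\}$ is a well-orderable partition of $P$ into finite pieces, so by $UT(WO,WO,WO)$ (which holds here, or can be verified directly) $P$ is well-orderable in $\mathcal{N}$.

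For CWF: let $(P,\le)\in\mathcal{N}$ be any poset with support $E$. By the paragraph above, $P$ is well-orderable in $\mathcal{N}$, say $P=\{p_\xi:\xi<\lambda\}$ for some ordinal $\lambda$. Using this well-ordering one can, by transfinite recursion without any choice, build a cofinal well-founded subset: recursively pick $p_\xi$ into the set exactly when it is $\le$-above nothing already chosen that it should wait for, or more simply, take a maximal well-founded subset via the standard argument that in a well-orderable poset a cofinal well-founded subset always exists (this is Tachtsis' ZF argument, available once $P$ is well-orderable). So CWF holds in $\mathcal{N}$; then LW holds in $\mathcal{N}$ by \textbf{Lemma 2.6(2)}.

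For CS: let $(P,\le)\in\mathcal{N}$ be a poset with no maximal element, with support $E$. Again $P$ is well-orderable in $\mathcal{N}$. Following the template of [\cite{THS2016},\textbf{Theorem 3.26}], once $P$ is well-ordered one splits a cofinal subset into two disjoint cofinal subsets by a transfinite back-and-forth recursion: enumerate $P=\{p_\xi:\xi<\lambda\}$, and at stage $\xi$ put into $C_0$ (resp. $C_1$) an element strictly above $p_\xi$ chosen canonically from the well-ordering, alternating parity, using that $P$ has no maximal element to guarantee such elements exist; the two resulting sets are disjoint by construction and each is cofinal because every $p_\xi$ has something above it in each. This whole construction is carried out inside $\mathcal{N}$ because the well-ordering of $P$ and all recursion data are supported by $E$. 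Hence CS holds, and then vDCP holds in $\mathcal{N}$ by \textbf{Lemma 2.6(1)}.

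The main obstacle is the first step: verifying carefully that in \emph{both} cases — and uniformly — orbits under $\mathrm{fix}_{\mathcal{G}}(E)$ are finite, and hence that $P$ is well-orderable in $\mathcal{N}$. In the "bounded exponent $n$" case the group $\mathcal{G}$ need not consist of finitary permutations, so the \textbf{Theorem 4.3} argument does not apply verbatim; instead one argues that $\mathrm{fix}_{\mathcal{G}}(E)$ acts on $P$ through a quotient whose relevant part is finite, or invokes that each $\eta\in\mathcal{G}$ has order dividing $n$ so the cyclic group $\langle\eta\rangle$ acting on $p$ gives $|Orb_{\langle\eta\rangle}(p)|\mid n$, and combines this with the finiteness of support to bound $|Orb_E(p)|$. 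Once $P$ is well-orderable the CS and CWF constructions are the routine ZF/ZFA transfinite recursions from the cited papers, requiring only the observation that they can be performed with parameters supported by $E$ so that the output lies in $\mathcal{N}$.
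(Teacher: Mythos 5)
Your argument has a fatal gap at its very first step: the claim that $P$ is well-orderable in $\mathcal{N}$ cannot be right, because if every poset (in particular every set with the trivial order) in $\mathcal{N}$ were well-orderable, then AC would hold in $\mathcal{N}$ — but the models covered by this lemma are built precisely so that AC fails (e.g.\ $LOC_{n}^{-}$ fails in $\mathcal{M}_{n}$ by Theorem 5.2(2), $AC_{2}$ fails in $\mathcal{N}_{2}$, and $AC_{n}^{-}$ fails in $\mathcal{N}^{1}_{HT}(n)$). Both of the sub-claims you use to reach it are false in general. First, $Orb_{E}(p)$ need not be finite: in the ``moves only finitely many atoms'' case nothing prevents $\mathrm{fix}_{\mathcal{G}}(E)$ from acting with infinite orbits (in $\mathcal{N}^{1}_{HT}(n)$ the permutations $\phi_{\psi}$ shuffle the blocks $A_{i}$, so the orbit of a single atom outside $E$ is already infinite), and a group of exponent $n$ can likewise act with infinite orbits (only the orbit under a single $\langle\eta\rangle$ is bounded by $n$, not the orbit under the whole group). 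Second, even in models where orbits \emph{are} finite (such as $\mathcal{M}_{n}$, where $\mathcal{G}$ is a weak direct product of finite groups), you cannot invoke $UT(WO,WO,WO)$: that principle fails in exactly these models, since a well-ordered partition into finite pieces yields a well-ordering of the union only if one can choose well-orderings of the pieces, which is essentially the choice principle being refuted.

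The paper's proof goes a genuinely different way and never tries to well-order $P$. The key structural fact it extracts from the hypotheses is that each orbit $Orb_{E}(p)$ is an \emph{antichain}: if $\phi(p)<\psi(p)$ for $\phi,\psi\in\mathrm{fix}_{\mathcal{G}}(E)$, then $\pi=\psi^{-1}\phi$ satisfies $\pi(p)<p$ and (in either case of the hypothesis) $\pi^{k}=1_{A}$ for some $k$, whence $p=\pi^{k}(p)<\pi^{k-1}(p)<\dots<\pi(p)<p$, a contradiction. This is where the torsion/finitary hypothesis is actually used. One then has a well-orderable family $\{Orb_{E}(p):p\in P\}$ of antichains covering $P$, and the constructions of a cofinal well-founded subset and of two disjoint cofinal subsets are carried out by transfinite recursion over this family of antichains, following [THS2016, Theorem 3.26] and [Tac2018, Theorem 10(ii)]; these recursions work with whole orbits at a time and never require selecting elements from them. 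If you want to repair your write-up, replace the ``orbits are finite, hence $P$ is well-orderable'' step with the ``orbits are antichains'' lemma and then adapt the cited recursions accordingly.
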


\begin{proof} We follow the steps below.
\begin{enumerate}
\item Let $(P,\leq)$ be a poset in $\mathcal{N}$ and $E\in [A]^{<\omega}$ be a support of $(P,\leq)$.
We can write $P$ as a disjoint union of ﬁx$_{\mathcal{G}}(E)$-orbits, i.e., $P=\bigcup \{Orb_{E}(p):p
\in P\}$, where $Orb_{E}(p)=\{\phi(p): \phi\in$ ﬁx$_{\mathcal{G}}(E)\}$ for all $p \in P$. The family $\{Orb_{E}(p): p \in P\}$ is well-orderable in $\mathcal{N}$ since ﬁx$_{\mathcal{G}}(E) \subseteq Sym_{\mathcal{G}}(Orb_{E}(p))$ for all $p \in P$ (cf. the arguments of \textbf{[\cite{Tac2016}, claim 4]}).

\item  We prove that $Orb_{E}(p)$ is an antichain in $P$ for each $p\in P$. Otherwise there is a $p\in P$, such that $Orb_{E}(p)$ is not an antichain in $(P,\leq)$. Thus, for some $\phi,\psi\in$ fix$_{\mathcal{G}}(E)$, $\phi(p)$ and $\psi(p)$ are comparable. Without loss of generality we may assume $\phi(p)<\psi(p)$. Let $\pi=\psi^{-1}\phi$. Consequently, $\pi(p)<p$. 

\textbf{Case 1:} Suppose there is a $n\in\omega\backslash \{0,1\}$, such that for every $\eta\in \mathcal{G}$, $\eta^{n}=1_{A}$. So $\pi^{n}=1_{A}$. Thus, $p=\pi^{n}(p)<\pi^{n-1}(p)<...<\pi(p)<p$. By transitivity of $<$, $p<p$, which is a contradiction.

\textbf{Case 2:} Suppose each $\eta\in \mathcal{G}$, moves only finitely many atoms. Then for some $k<\omega$, $\pi^{k}=1$. Rest follows from the arguments in \textbf{Case 1}.

\item  We can follow [\cite{HST2016}, \textbf{Theorem 3.26}] to see that $CS$ holds in $\mathcal{N}$. 

\item Although in every Fraenkel-Mostowski model,  $CS$ implies $vDCP$ in ZFA (cf. \textbf{Lemma 2.5}), we can recall the arguments from the $1^{st}$-paragraph of [\cite{HST2016}, \textbf{Page175}] to give a direct proof of $vDCP$ in $\mathcal{N}$ without invoking $CS$.

\item We can follow [\cite{Tac2018}, \textbf{Theorem 10 (ii)}] to see that $CWF$ holds in $\mathcal{N}$. 
\item Although $CWF$ implies $LW$ in ZFA (cf. \textbf{Lemma 2.5}), we can recall the arguments from the proof of [\cite{HT2020}, \textbf{Theorem 10(ii)}] to give a direct proof of $LW$ in $\mathcal{N}$ without invoking $CWF$. In particular, using a given linear order in $\mathcal{N}$, the fact that
an element $x$ of $\mathcal{N}$ is well-orderable in $\mathcal{N}$ if fix$_{\mathcal{G}}(x) \in \mathcal{F}$ and a similar argument as in \textbf{Case 1} of step (2), one can verify that $LW$ is true in $\mathcal{N}$ without invoking $CWF$.
\end{enumerate}
\end{proof}
%%%%%%%%%%%%%%%%%%%%%%%%%%%%%
\begin{remark}
The authors of \cite{HST2016} communicated to the referee that in an unpublished manuscript \cite{HST} of theirs, they have shown that $MC$ implies $CWF$ in ZFA. The referee communicated to us the argument with their kind permission. We quote their argument from \cite{HST} for reader's convenience:
``Assume that $MC$ is true. Let $(P,\leq)$ be a (non-empty) poset and also let $F$ be a multiple choice function for $\mathcal{P}(P) \backslash \{\emptyset\}$. Using $F$, a cofinal well-founded subset of $P$ can be recursively defined as follows: Let $A_{0} = P$ and $B_{0} = F(A_{0})$. Assume that for some ordinal $\alpha > 0$, sets $A_{\beta}$ and $B_{\beta}$ are defined for all $\beta < \alpha$. Define $A_{\alpha} = \{p \in P : \forall \beta < \alpha \forall q \in B_{\beta} (p \nleq q)\}$ and $B_{\alpha} = F(A_{\alpha})$, if $A_{\alpha}$ is non-empty. Since On (the class of all ordinal numbers) is a proper class, there is $\gamma \in On$ such that $A_{\gamma} = \emptyset$. Clearly, $B =\bigcup\{B_{\alpha} : \alpha < \gamma\}$ is a cofinal well-founded subset of $P$."
\end{remark}
%%%%%%%%%%%%%%%%%%%%%%%%%
\subsection{A model of ZFA}
Herrlich, Howard, and, Tachtsis [\cite{HHT2012}, \textbf{Theorem 11}, \textbf{Case 1}, \textbf{Case 2}] constructed two different classes of permutation models. Halbeisen--Tachtsis  [\cite{HT2020}, \textbf{Theorem 10(ii)}] proved that LOC$_{2}^{-}$ does not imply $LOKW_{4}^{-}$ in ZFA. For the sake of convenience, we denote by  $\mathcal{N}_{HT}^{2}$, the permutation model of [\cite{HT2020}, \textbf{Theorem 10(ii)}]. The model $\mathcal{N}_{HT}^{2}$ is very similar to the model from [\cite{HHT2012},\textbf{Theorem 11}, \textbf{Case 2}] except the fact that in $\mathcal{N}_{HT}^{2}$ each permutation $\phi$ in the group $\mathcal{G}$ of permutations of the sets of atoms, can move only finitely many atoms. Fix a natural number $n$ such that $n= 3$ or $n >4$ and an infinite well-ordered cardinal number $\kappa$. We construct a model $\mathcal{M}_{\kappa,n}$ of ZFA similar to the model constructed in [\cite{HHT2012},\textbf{Theorem 11}, \textbf{Case 1}], where each permutation $\phi$ in the group $\mathcal{G}$ of permutations of the sets of atoms, can move only finitely many atoms. 

\begin{thm}
{\em Let $n$ be a natural number such that $n= 3$ or $n >4$ and $\kappa$ be an infinite well-ordered cardinal number. Then there is a model $\mathcal{M}_{\kappa,n}$ of ZFA where the following hold.
\begin{enumerate}
    \item If $X\in \{LOC^{-}_{2}, MC\}$, then $X$ holds.
    \item $LOC_{n}^{-}$ fails. 
    \item If $X\in\{\mathcal{P}_{n}$, $\mathcal{P}_{G,H_{n}}, DT, LT\}$, then $X$ fails.
\end{enumerate}
}
\end{thm}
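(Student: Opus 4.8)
The plan is to build $\mathcal{M}_{n}$ as a Fraenkel--Mostowski model imitating the permutation model of [\cite{HHT2012}, \textbf{Theorem 11}, \textbf{Case 1}], modified in exactly the way $\mathcal{N}_{HT}^{2}$ modifies [\cite{HHT2012}, \textbf{Theorem 11}, \textbf{Case 2}]. Thus the atoms carry the same structure as there --- in particular they include a ``linearly indexed'' family $\mathcal{A}=\{A_{i}:i\in I\}$ of pairwise disjoint $n$-element blocks (plus whatever auxiliary structure the cited construction uses) --- the group $\mathcal{G}$ of permutations of the set $A$ of atoms is taken to consist of those structure-preserving permutations that in addition \emph{move only finitely many atoms}, and $\mathcal{F}$ is the normal filter generated by $\{\mathrm{fix}_{\mathcal{G}}(E):E\in[A]^{<\omega}\}$. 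The restriction ``$n=3$ or $n>4$'' is imposed precisely because it is the range for which the analysis of selection functions on $\mathcal{A}$ in [\cite{HHT2012}, \textbf{Case 1}] yields the failure of choice on $\mathcal{A}$ while keeping $LOC_{2}^{-}$ true.

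For part (1): since every member of $\mathcal{G}$ moves only finitely many atoms and $\mathcal{F}$ is generated by pointwise stabilizers of finite sets, $\mathcal{M}_{n}$ satisfies the hypotheses of \textbf{Lemma 5.1} (in the ``finitely many atoms'' case), so CS, vDCP, CWF, and LW all hold. The remaining clause, that $LOC_{2}^{-}$ holds, is not delivered by \textbf{Lemma 5.1}; I would instead transcribe the verification of $LOC_{2}^{-}$ from [\cite{HHT2012}, \textbf{Theorem 11}] and [\cite{HT2020}, \textbf{Theorem 10(ii)}] and check that it still works for the smaller group --- the permutations used there to refute a purported partial choice function on an infinite linearly orderable family of $2$-element sets are already finitely supported, so the argument carries over verbatim.

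For part (2): the family $\mathcal{A}$ is linearly orderable in $\mathcal{M}_{n}$ (it inherits the ordering of $I$), yet it has no partial choice function in $\mathcal{M}_{n}$, which is shown by the usual support argument adapted to the restricted group. If $f\in\mathcal{M}_{n}$ had a finite support $E$ and chose from infinitely many $A_{i}$, pick $A_{i_{0}}$ none of whose atoms occur in $E$; since $n\geq 3$ there is a permutation of $A$ fixing $E$ pointwise, fixing $A_{i_{0}}$ setwise, moving the atom $f(A_{i_{0}})$, and moving only finitely many atoms --- hence lying in $\mathrm{fix}_{\mathcal{G}}(E)\subseteq Sym_{\mathcal{G}}(f)$ --- a contradiction. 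So $LOC_{n}^{-}$ fails, witnessed by $\mathcal{A}$ itself.

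Part (3) is then deduced from this failure. For $\mathcal{P}_{n}$: by \textbf{Observation 3.1}, $\mathcal{P}_{n}$ is equivalent to $AC_{n}$, and $AC_{n}$ would give a choice function on $\mathcal{A}$; so $\mathcal{P}_{n}$ fails. For DT: order $\bigcup\mathcal{A}$ by declaring each $A_{i}$ an antichain and $x<y$ whenever $x\in A_{i}$, $y\in A_{j}$ with $i<j$ in $I$; this poset lies in $\mathcal{M}_{n}$, every antichain sits inside a single $A_{i}$ (so there is no antichain of size $n+1$, while each $A_{i}$ is one of size $n$), so DT would partition it into $n$ chains, each meeting each $A_{i}$ in at most one point, whence by counting each chain meets each $A_{i}$ in exactly one point and any single one of these $n$ chains is a transversal yielding a choice function on $\mathcal{A}$ --- contradiction. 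For LT: LT implies the Boolean prime ideal theorem, hence the ordering principle, hence $AC_{n}$ (linearly order $\bigcup\mathcal{A}$ and take the least element of each $A_{i}$); so LT fails. The main obstacle throughout is the re-verification in parts (1) and (2) that shrinking $\mathcal{G}$ to its finitely-supported permutations does not upset the fine balance of the [\cite{HHT2012}, \textbf{Case 1}] model: a smaller group enlarges the class of symmetric elements, so one must confirm both that no partial choice function on $\mathcal{A}$ becomes symmetric and that the partial choice functions witnessing $LOC_{2}^{-}$ remain available; the three deductions in part (3) are then routine.
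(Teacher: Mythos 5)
Your construction and the treatment of parts (1), (2), and the $\mathcal{P}_{n}$ and DT clauses of (3) follow essentially the same route as the paper: the group is the weak direct product of the alternating groups on the $n$-element blocks (so every permutation moves only finitely many atoms), \textbf{Lemma 5.1} gives CS, vDCP, CWF, LW, the $LOC_{2}^{-}$ verification is imported from [\cite{HHT2012}] and [\cite{HT2020}], and the support argument on a block disjoint from $E$ (using transitivity of the alternating group for $n\geq 3$) kills partial choice on $\mathcal{A}$. Your DT argument is just a direct re-proof of the fact the paper cites as [\cite{Tac2019}, \textbf{Theorem 3.1(ii)}], and it is correct.

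There is, however, a genuine gap in your treatment of LT. You derive its failure from the chain ``LT $\Rightarrow$ BPI $\Rightarrow$ ordering principle $\Rightarrow AC_{n}$'', but the first link is unjustified and is in fact untenable: LT as defined here (Form 253) only asserts elementary equivalence of $\mathcal{A}$ with $\mathcal{A}^{I}/\mathcal{U}$ for a \emph{given} ultrafilter $\mathcal{U}$; it produces no ultrafilters, and by Howard's classical theorem LT together with BPI is equivalent to AC, while Tachtsis \cite{Tac2019a} shows LT does not imply AC in ZFA. Hence LT cannot imply BPI, and your deduction collapses at that step. The paper's route avoids this entirely: by [\cite{Tac2019a}, \textbf{Theorem 4.6(i)}], in every Fraenkel--Mostowski model LT implies $AC^{WO}$, and the family $\mathcal{A}=\{A_{\alpha}:\alpha<\kappa\}$ is well-ordered yet has no choice function in $\mathcal{M}_{n}$ by your own part (2), so LT fails. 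Substituting this argument repairs the proof; everything else stands.
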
 
\begin{proof}
Fix a natural number $n$ such that $n=3$ or $n > 4$ and an infinite well-ordered cardinal number $\kappa$. 

{\bf{\underline{Defining the ground model $M$:}}} We start with a ground model $M$ of $ZFA+AC$ where $A$ is a
$\kappa$-sized set of atoms written as a disjoint union $\bigcup\{A_{\alpha}:\alpha< \kappa\}$, where $A_{\alpha}=\{a_{\alpha,1},a_{\alpha,2},...,a_{\alpha,n}\}$ such that $\vert A_{\alpha}\vert=n$ for all $\alpha<\kappa$.

{\bf{\underline{Defining the group $\mathcal{G}$ and the filter $\mathcal{F}$ of subgroups of $\mathcal{G}$:}}}
\begin{itemize}
    \item {\bf{\underline{Defining $\mathcal{G}$:}}}
    Let $\mathcal{G}$ be the {\em weak direct product} of $\mathcal{G}_{\alpha}$'s where $\mathcal{G}_{\alpha}$ is the alternating group on $\mathcal{A}_{\alpha}$ for each $\alpha<\kappa$.  Hence, a permutation $\eta$ of $A$ is an element of $\mathcal{G}$ if and only if for every $\alpha<\kappa$, $\eta\restriction A_{\alpha} \in \mathcal{G}_{\alpha}$, and $\eta \restriction A_{\alpha} = 1_{A_{\alpha}}$ for all but ﬁnitely many ordinals $\alpha<\kappa$. Consequently, {\em every element $\eta\in \mathcal{G}$ moves only ﬁnitely many atoms}. 
    
    \item {\bf{\underline{Defining $\mathcal{F}$:}}} Let $\mathcal{F}$ be the normal filter of subgroups of $\mathcal{G}$ generated by $\{$fix$_{\mathcal{G}}(E): E\in [A]^{<\omega}\}$. 
    \end{itemize}
{\bf{\underline{Defining the permutation model:}}} Consider the permutation model $\mathcal{M}_{\kappa,n}$ determined by $M$, $\mathcal{G}$ and $\mathcal{F}$.

{\bf{\underline{(1). If $X\in \{LOC^{-}_{2}, MC\}$, then $X$ holds in $\mathcal{M}_{\kappa,n}$:}}}
We note that $MC$ is true in the model $\mathcal{M}_{\kappa,n}$. The proof is fairly similar to the one that $MC$ is true in the Second Fraenkel Model (see \cite{Jec1973}). Applying the group-theoretic facts from [\cite{HHT2012}, \textbf{Theorem 11}, \textbf{Case 1}] and following the arguments of the proof of [\cite{HT2020}, \textbf{Theorem 10(ii)}] we may observe that $LOC_{2}^{-}$ holds in $\mathcal{M}_{\kappa,n}$.

{\bf{\underline{(2). $LOC_{n}^{-}$ fails in $\mathcal{M}_{\kappa,n}$:}}} We prove that in $\mathcal{M}_{\kappa,n}$, the well-ordered family $\mathcal{A} = \{A_{\alpha} : \alpha< \kappa\}$ of $n$-element sets does not have a partial choice function. For the sake of contradiction, let $\mathcal{B}$ be an inﬁnite subfamily of $\mathcal{A}$ with a choice function $f \in \mathcal{M}_{n}$ and support $E\in [A]^{<\omega}$. Since $E$ is finite, there is an $i<\kappa$ such that $A_{i} \in \mathcal{B}$ and $A_{i}\cap E = \emptyset$. Without loss of generality, let $f(A_{i})=a_{i_{1}}$. Consider the permutation $\pi$ which is the identity on $A_{j}$, for all $j \in \kappa-{i}$, and let $(\pi \restriction A_{i})(a_{i_{1}})=a_{i_{2}}\not=a_{i_{1}}$. Then $\pi$ fixes $E$ pointwise, hence $\pi(f) = f$. So, $f(A_{i})=a_{i_{2}}$ which contradicts the fact that $f$ is a function. Thus $LOC_{n}^{-}$ fails in $\mathcal{M}_{\kappa,n}$. 

{\bf{\underline{(3). If $X\in\{\mathcal{P}_{n}$, $\mathcal{P}_{G,H_{n}}, DT, LT\}$, then $X$ fails in $\mathcal{M}_{\kappa,n}$:}}} Since $AC_{n}$ fails in the model from the arguments of the previous paragraph, $\mathcal{P}_{n}$ fails in the model  by \textbf{Proposition 3.2}. Since $AC_{n}$ fails, $\mathcal{P}_{G,H_{n}}$ fails as well (cf. $\S$1.4). Since in $\mathcal{M}_{\kappa,n}$, the linearly-ordered family $\mathcal{A} = \{A_{\alpha} : \alpha< \kappa\}$ of $n$-element sets does not have a  choice function, $DT$ fails in $\mathcal{M}_{\kappa,n}$ by  [\cite{Tac2019}, \textbf{Theorem 3.1(ii)}]. Since in every Fraenkel--Mostowski model of ZFA, $LT$ implies $AC^{WO}$ (cf.[\cite{Tac2019a}, \textbf{Theorem 4.6(i)}]), LT fails in $\mathcal{M}_{\kappa,n}$ since the well-ordered family $\mathcal{A} = \{A_{\alpha} : \alpha< \kappa\}$ does not have a  choice function.
\end{proof}

\begin{corr}{(ZFA)}
{\em (LOC$_{2}^{-}$ + MC) does not imply $CAC^{\aleph_{0}}_{1}$.}
\end{corr}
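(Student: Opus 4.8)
The plan is to obtain the corollary by specializing the construction of Theorem 5.2 and invoking Theorem 4.6. Recall that Theorem 4.6 gives, in ZF, the implication $CAC^{\aleph_0}_1 \Rightarrow PAC^{\aleph_1}_{fin}$; hence it suffices to produce a single model of ZFA in which $LOC_2^-$, CS, and CWF all hold while $PAC^{\aleph_1}_{fin}$ fails, and then read off the failure of $CAC^{\aleph_0}_1$ from the contrapositive.

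First I would take the model $\mathcal{M}_n$ of Theorem 5.2, say with $n=3$ (any $n$ with $n=3$ or $n>4$ works), and, crucially, choose the infinite well-ordered cardinal $\kappa$ indexing the ground-model set of atoms to be $\kappa=\aleph_1$. Since ordinals and well-orderings of ordinals are absolute between the ground model $M$ and the permutation model $\mathcal{M}_n$, we have $\aleph_1^{\mathcal{M}_n}=\aleph_1^{M}$, so $\mathcal{A}=\{A_\alpha:\alpha<\aleph_1\}$ is a genuinely $\aleph_1$-sized family of $3$-element (hence non-empty finite) sets in $\mathcal{M}_n$. By part (1) of Theorem 5.2, $LOC_2^-$, CS and CWF (and also vDCP, LW) hold in $\mathcal{M}_n$.

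Next I would quote the computation in part (2) of the proof of Theorem 5.2: in $\mathcal{M}_n$ the well-ordered family $\mathcal{A}$ has no partial choice function. Indeed, for any infinite $\mathcal{B}\subseteq\mathcal{A}$ with a choice function $f\in\mathcal{M}_n$ of finite support $E$, one picks $A_i\in\mathcal{B}$ with $A_i\cap E=\emptyset$ and applies a permutation $\pi\in\mathcal{G}$ that is the identity off $A_i$ but moves $f(A_i)$; then $\pi$ fixes $E$ pointwise so $\pi(f)=f$, contradicting that $f$ is a function. In particular no $\aleph_1$-sized subfamily of $\mathcal{A}$ has a choice function, so $\mathcal{A}$ itself witnesses the failure of $PAC^{\aleph_1}_{fin}$ in $\mathcal{M}_n$. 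By the contrapositive of Theorem 4.6, $CAC^{\aleph_0}_1$ fails in $\mathcal{M}_n$, and since $LOC_2^-$, CS and CWF hold there, $(LOC_2^- + CS + CWF)$ does not imply $CAC^{\aleph_0}_1$ in ZFA.

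There is no real obstacle here beyond bookkeeping: the only point needing a line of care is the legitimacy of instantiating $\kappa=\aleph_1$, i.e. that $\aleph_1^{\mathcal{M}_n}=\aleph_1^{M}$ and that the index set of $\mathcal{A}$ has size $\aleph_1$ inside the model; both follow at once from the absoluteness of ordinals and of well-orderings of ordinals in Fraenkel--Mostowski models. Everything else is a direct application of Theorem 5.2 and Theorem 4.6.
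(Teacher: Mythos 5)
Your proposal is correct and is essentially the paper's own proof: the paper likewise instantiates $\kappa=\aleph_{1}$ in the model $\mathcal{M}_{n}$ of Theorem 5.2, uses the support argument of Theorem 5.2(2) to refute $PAC^{\aleph_{1}}_{fin}$, and concludes via the contrapositive of Theorem 4.6. You merely spell out the bookkeeping (absoluteness of $\aleph_{1}$) that the paper leaves implicit.
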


\begin{proof} Consider the permutation model $\mathcal{M}_{\kappa,n}$ constructed in \textbf{Theorem 5.3} by letting the infinite well-ordered cardinal number $\kappa$ to be $\aleph_{1}$. Rest follows from \textbf{Theorem 4.5} and the arguments of \textbf{Theorem 5.3(2)}.
\end{proof}

We note that $\mathcal{M}_{\aleph_{0},n}$ is actually equal to the model of [\cite{HHT2012}, proof of \textbf{Theorem 11, Case 1}]; for an argument, one follows in much the same way the ideas of [\cite{HT2020}, \textbf{Remark 2}, \textbf{page 589}]. Following the arguments in the proof of \textbf{Theorem 5.3(3)}, we can also observe that $DT$ and $LT$ fails in the model from [\cite{HT2020}, \textbf{Theorem 10(ii)}]. 
%%%%%%%%%%%%%%%%%%%%%%%%%%%%%%%%%%
\begin{Remark}
We recall two more permutation models where $MC$ holds. 
\begin{itemize}
\item We recall that $MC$ holds in the Second Fraenkel model (labeled as Model $\mathcal{N}_{2}$ in \cite{HR1998}) (cf. \cite{HR1998}). 
\item Fix a prime $p\in \omega$.
We recall the model $\mathcal{N}_{22}(p)$ from [\cite{HT2013}, \textbf{$\S$4.4}]. Let $A$ be the disjoint union of countably many sets of cardinality $p$, i.e., $A = \bigcup_{i\in \omega} A_{i}$ where for each $i \in \omega$, $A_{i} =
\{a_{i,1} ,a_{i,2} ,...,a_{i,p}\}$. Let $\mathcal{G}$ be the group generated by $\{\phi_{i} : i \in \omega\}$ where for each $i \in \omega$,  $\phi_{i}$ is the cycle $(a_{i,1},a_{i,2},...,a_{i,p})$. Let $\mathcal{N}_{22}(p)$ is the model determined by $\mathcal{G}$ and the finite support filter $\mathcal{F}$.
Howard--Tachtsis [\cite{HT2013}, \textbf{Theorem 4.7}] proved that $MC(q)$ holds in $\mathcal{N}_{22}(p)$ for every prime $q\not=p$. 
\end{itemize}

Fix a prime $p\in\omega$. Let $X\in \{CS, CWF, vDCP, LW\}$. Since $MC$ is true in the model of the proof of \textbf{Theorem 5.3}, $\mathcal{N}_{2}$, and $\mathcal{N}_{22}(p)$, $X$ holds in all the mentioned models by \textbf{Remark 5.2} and \textbf{Lemma 2.5}. However, we note that \textbf{Lemma 5.1} can play under certain premises, {\em independently from $MC$}. In particular, the referee communicated to us that $MC$ may fail in the following permutation model $\mathcal{N}$—for example, take $A$ countably infinite, $\mathcal{G}$ the group of all finitary permutations of $A$ and $\mathcal{F}$ the finite support filter; then the resulting permutation model is equal to the basic Fraenkel model, in which $MC$ is false. On the other hand, $X$ is true in the basic Fraenkel model (cf. \cite{Tac2018}, \cite{HST2016}).
We can use \textbf{Lemma 5.1} to see that $X$ holds in the model of the proof of \textbf{Theorem 5.3}, $\mathcal{N}_{2}$, and $\mathcal{N}_{22}(p)$, without invoking $MC$, because all the models are determined by a group $\mathcal{G}$ with the following properties, and the finite support filter $\mathcal{F}$. 
\begin{enumerate}
    \item Every permutation $\phi\in\mathcal{G}$ moves only finitely many atoms in $\mathcal{M}_{\kappa,n}$.
    
    \item We note that $\mathcal{N}_{2}$ was constructed via a group $\mathcal{G}$ such that for all $\phi\in \mathcal{G}$, $\phi^{2} = 1_{A}$. 
    \item We note that $\mathcal{N}_{22}(p)$ was constructed via a group $\mathcal{G}$ such that for all $\phi\in \mathcal{G}$, $\phi^{p} = 1_{A}$. 
\end{enumerate}
\end{Remark}

%%%%%%%%%%%%%%%%%
\section{Conclusion}
\subsection{Synopsis of theorems, propositions, and corollaries}
\begin{itemize}
\item (ZF) $(\forall n\in \omega\backslash\{0,1\})AC_{n}\leftrightarrow \mathcal{P}_{n}$ (cf. [\textbf{$\S$3}, \textbf{Proposition 3.2}]).
\item (ZF) $UT(\aleph_{0},\aleph_{0},\aleph_{0})\rightarrow \mathcal{P}_{lc,c}\rightarrow AC_{\aleph_{0}}^{\aleph_{0}}\rightarrow AC_{fin}^{\omega}\longleftrightarrow \mathcal{P}_{lf,c}$ (cf. [\textbf{$\S$3}, \textbf{Proposition 3.3}, \textbf{Proposition 3.4}]).  
\item (ZF) $AC_{2}\longleftrightarrow \mathcal{P}_{G,H_{2}}$ (cf. [\textbf{$\S$3}, \textbf{Proposition 3.6}]).
\item (ZF) $BPI\longleftrightarrow \mathcal{P}_{G,H_{n}}$ if $n\geq 3$ (cf. [$\S$1.4]). 
\item (ZF)  Let $G$ be a locally finite and connected graph.
\begin{itemize}
    \item {\em $AC_{fin}^{\omega}\rightarrow$  $\mathcal{P}_{G,H}$} (cf. [\textbf{$\S$3}, \textbf{Proposition 3.7}]).
    \item $AC_{fin}^{\omega}\rightarrow$ {\em `If $G=(V_{G},E_{G})$ is a chordal graph, then there is an ordering $<$ of $V_{G}$ such that $\{w < v : \{w,v\} \in E_{G}\}$ is a clique for each $v\in V_{G}$'} (cf. [\textbf{$\S$3}, \textbf{Proposition 3.8}]).
\end{itemize}
\item (ZFA) Let $n \in \omega \backslash \{0, 1\}$. The statement “For every regular $\aleph_{\alpha}$, $CAC^{\aleph_{\alpha}}_{1}$” implies neither $AC_{n}^{-}$ nor “there are no amorphous sets” (cf. [\textbf{$\S$4}, \textbf{Theorem 4.3}]). 
\item (ZF) $CAC^{\aleph_{0}}_{1} \rightarrow PAC^{\aleph_{1}}_{fin}$ (cf. [\textbf{$\S$4}, \textbf{Theorem 4.5}]).
\item (ZF) $DC \not\rightarrow$ {\em $CAC_{1}^{\aleph_{0}}$} (cf. [\textbf{$\S$4}, \textbf{Corollary 4.6}]).
%\item (ZF) CWF $\rightarrow$ KW$_{PO}$ (cf. [$\S$5, \textbf{Observation 5.1}]).
\item (ZFA) Let $n\in\omega$ such that $n=3$ or $n>4$. Then ($LOC_{2}^{-}$ + $MC$) $\not\rightarrow X$, if $X\in \{LOC_{n}^{-}, DT, LT\}$ (cf. [\textbf{$\S$5}, \textbf{Theorem 5.3}]).
\item (ZFA) ($LOC_{2}^{-}$ + $MC$) $\not\rightarrow CAC^{\aleph_{0}}_{1}$ (cf. [\textbf{$\S$5}, \textbf{Corollary 5.4}]).
\end{itemize}

\subsection{Questions and further studies} 
In this paper, we studied the relationship of certain weak choice principles, like $AC_{n}$, $AC_{fin}^{\omega}$, $AC_{\aleph_{0}}^{\aleph_{0}}$, and $UT(\aleph_{0}, \aleph_{0}, \aleph_{0})$, with some weaker formulations of $\mathcal{P}$ (cf. $\S$1.1) in ZF. It would be interesting to see if some other weak choice principle, like $BPI$, is equivalent to some weaker formulation of $\mathcal{P}$ in ZF. 

For a natural number $k<\omega$, we denote  by $\mathcal{Q}_{k}$ the following statement. 
\begin{center}
    {\em `$\chi(E_{G_{1}})=k<\omega$ and $\chi(E_{G_{2}})\geq\omega$ implies $\chi(E_{G_{1}\times G_{2}})=k$.'}
\end{center}
We observed that under $AC_{fin}^{\omega}$ the above statement holds in ZF if $G_{1}$ is a locally finite connected graph (\textbf{cf. Remark 3.9 (2)}). Moreover, we proved that if $X\in \{AC_{3}, AC^{\omega}_{fin}\}$, then  $\mathcal{Q}_{k}\not\rightarrow X$ in ZFA when $k=3$ (\textbf{cf. [\cite{BG2020}, $\S$1]}). We recall the following problem posted in \cite{BG2020}.

\begin{question}{(\textbf{[\cite{BG2020}, Question 5.2]})} {\em If $k>3$, does BPI follow from $\mathcal{Q}_{k}$? Otherwise is there any model of ZF or ZFA, where $\mathcal{Q}_{k}$ holds for $k>3$, but BPI fails?}
\end{question} 

In this direction, we ask the following question.

\begin{question}{\em Is $AC_{2}$ equivalent to $\mathcal{Q}_{3}$? Otherwise is there any model of ZF or ZFA, where $\mathcal{Q}_{3}$ fails, but $AC_{2}$ holds?}
\end{question} 

Fix $k\in \omega\backslash \{0,1,2\}$ and $n\in \omega\backslash\{0,1\}$. We recall that if $X\in  \{\mathcal{P}_{G,H_{n}}, DT, MHT, \mathcal{Q}_{k}\}$, then $BPI$ implies $X$ in ZF (cf. \cite{Haj1985}, \cite{Tac2019}, \cite{HR1998}). It would be interesting to see the interrelationship between the above mentioned implications of $BPI$ in ZF. For instance, we recall the following open problem posted in \cite{Tac2019}.

\begin{question}{(cf. \textbf{[\cite{Tac2019}, $\S$ 4]})} 
{\em Does MHT imply DT?}
\end{question}

In this direction, we ask the following question.
\begin{question}
{\em Fix $k\in \omega\backslash \{0,1,2\}$. Does MHT imply $\mathcal{Q}_{k}$ in ZF?}
\end{question}

We list three more open problems related to $DT$ from \cite{Tac2019}.

\begin{question}{(cf. \textbf{[\cite{Tac2019}, $\S$ 4]})} 
{\em Is there a model of ZFA in which $AC^{LO}$ is true, but $DT$ is false?}
\end{question}

\begin{question}{(cf. \textbf{[\cite{Tac2019}, $\S$ 4]})} 
{\em Does RSL imply DT in ZF?}
\end{question}

\begin{question}{(cf. \textbf{[\cite{Tac2019}, $\S$ 4]})} 
{\em Does DT + $AC_{fin}$ imply BPI?}
\end{question}

Secondly, we studied the relationship of certain weak choice principles, like $MC$, $LOC_{2}^{-}$, $DC$, $AC_{n}^{-}$, and `there are no amorphous sets', with $CAC_{1}^{\aleph_{0}}$ in ZFA. It would be interesting to see the relationship of some other weak choice principles with $CAC_{1}^{\aleph_{0}}$ and $CAC^{\aleph_{0}}$ in ZFA. We also observed that under certain hypotheses on the group $\mathcal{G}$ and the normal filter $\mathcal{F}$, $CWF$ is true in the resulting permutation model $\mathcal{N}$ (cf. \textbf{Lemma 5.1}). The results $MC$ implies $CWF$ in ZFA (cf. \textbf{Remark 5.2}) and $CWF$ implies $LW$ in ZFA (cf. \textbf{Lemma 2.5}) are known.  It would be interesting to see the relationship of $CWF$ with some other weak choice principles in ZFA. We also list two open problems related to $CS$ from \cite{HST2016}.
\begin{question}{(\textbf{[\cite{HST2016}, Problem 5.1]})} 
{\em Is CS equivalent to AC in ZF? If not, which weak choice principles does CS imply? In particular,
does CS imply vDCP?}
\end{question}

\begin{question}{(\textbf{[\cite{HST2016}, Problem 5.3]})} 
{\em Does the Antichain Principle imply CS?}
\end{question}
%%%%%%%%%%%%%%%%%%%%%%%%%
\section{Acknowledgements}
We would like to thank Professor Eleftherios Tachtsis for communicating \textbf{Lemma 4.1} to us in a private conversation. We would like to thank the authors of \cite{HST2016} for communicating the proof of the statement `{\em MC implies CWF in ZFA}' to the reviewer (cf. \textbf{Remark 5.2}) from one of their manuscripts. We would like to thank the reviewer for reading the manuscript carefully and providing several suggestions which improved the exposition of our paper.
%%%%%%%%%%%%%%%%%%%%%%%%

\end{document}